\def\bw{\boldsymbol{w}}
\def\bv{\boldsymbol{v}}
\def\dv{\rm div}
\def\curl{\rm curl}
\def\rot{\rm rot}
\newtheorem{theorem}{Theorem}
\newtheorem{remark}[theorem]{Remark}
\newtheorem{lemma}[theorem]{Lemma}
\begin{document}
\title{A simple low-degree optimal finite element scheme for the elastic transmission eigenvalue problem}




\author{Yingxia Xi}
\address{School of Science, Nanjing University of Science and Technology, Nanjing 210094, China.}
\email{xiyingxia@njust.edu.cn}
\thanks{The research of Y. Xi is supported in part by the National Natural Science Foundation of China with Grant No.11901295, Natural Science Foundation of Jiangsu Province under BK20190431.}
\author{Xia Ji}
\address{School of Mathematics and Statistics, Beijing Institute of Technology, Beijing 100081, China.\\ Beijing Key Laboratory on MCAACI, Beijing Institute of Technology, Beijing 100081, China.}
\email{jixia@bit.edu.cn}
\thanks{The research of X. Ji is partially supported by the National Natural Science Foundation of China with Grant Nos.11971468 and 91630313.}
\author{Shuo Zhang}
\address{LSEC, Institute of Computational Mathematics and Scientific/Engineering Computing, Academy of Mathematics and System Sciences, Chinese Academy of Sciences, Beijing 100190, People's Republic of China}
\email{szhang@lsec.cc.ac.cn}
\thanks{The research of S. Zhang is supported partially by the National Natural Science Foundation of China with Grant Nos.11471026 and 11871465, the Strategic Priority Research Program of CAS, Grand No. XDB 41000000 and National Centre for Mathematics and Interdisciplinary Sciences, Chinese Academy of Sciences.}

\subjclass[2000]{31A30,65N30,47B07}
\keywords{elastic transmission eigenvalue problem, nonconforming finite element method, high accuracy.}

\begin{abstract}
The paper presents a finite element scheme for the elastic transmission eigenvalue problem written as a fourth order eigenvalue problem. The scheme uses piecewise cubic polynomials and obtains optimal convergence rate. Compared with other low-degree and nonconforming finite element schemes, the scheme inherits the continuous bilinear form which does not need extra stabilizations and is thus simple to implement.
\end{abstract}

\maketitle

\section{Introduction}
The transmission eigenvalue problem is important in the qualitative reconstruction in the inverse scattering theory of inhomogeneous media. For example, the eigenvalues can be used to obtain estimates for the physical characteristics of the hidden scatterer \cite{CakoniEtal2010IP, Sun2011IP} and have a multitude of applications in inverse problems for target identification and nondestructive testing \cite{CakoniRen,HarrisCakoniSun}. Besides, the transmission eigenvalues play a key role in the uniqueness and reconstruction in inverse scattering theory. Moreover, they can be used to design invisibility materials \cite{JiLiu2016}. There are different types of transmission eigenvalue problems, such as the acoustic transmission eigenvalue problem, the electromagnetic transmission eigenvalue problem,  and the elastic transmission eigenvalue problem, etc.

Since 2010, effective numerical methods for the acoustic transmission eigenvalues have been developed by many researchers \cite{ColtonMonkSun2010, Sun2011SIAMNA,JiSunTurner2012ACMTOM, AnShen2013JSC, Geng2016, Kleefeld2013IP, JiSunXie2014JSC,CakoniMonkSun2014CMAM, LiEtal2014JSC, YangBiLiHan2016,
Camano2018,HuangEtal2016JCP,HuangEtal2017arXiv,SunZhou2016,XiJiZhang2018,XiJiZhang2020}, while there are much fewer works for the electromagnetic transmission eigenvalue problem and the elastic transmission eigenvalue problem \cite{HuangHuangLin2015SIAMSC,MonkSun2012SIAMSC,SunXu2013IP,JiLiSun,YangHanBi2020,XiJi2018}.
In this paper, we try to develop effective numerical methods for transmission eigenvalue problem of elastic waves.

The non-self-adjointness and nonlinearity plaguing the numerical study of the elastic transmission eigenvalue problem are compounded by the tensorial structure of the elastic wave equation. In \cite{JiLiSun}, the elastic transmission eigenvalue problem is reformulated as solving the roots of a nonlinear function of which the values correspond to the generalized eigenvalues of a series of fourth order self-adjoint eigenvalue problems discretized by $H^2$-conforming finite element methods. The authors apply the secant iterative method to compute the transmission eigenvalues. However, at each step, a fourth order self-adjoint eigenvalue problem needs to be solved and only real eigenvalues can be captured.
Based on a fourth order variational formulation, Yang et al. \cite{YangHanBi2020} study the $H^2$-conforming methods including the classical $H^2$-conforming finite element method and the $H^2$-conforming spectral element method.
In \cite{XiJi2018}, Xi et al. propose an interior penalty discontinuous Galerkin method using C$^0$ Lagrange elements (C$^0$IP) for the elastic transmission eigenvalue problem which use less degrees of freedom than $C^1$ elements and much easier to be implemented. 
There have also existed some mixed methods for this problem\cite{XiJi2020, YangHanBiLiZhang2020}.
The mixed scheme in \cite{XiJi2020}  has similarity to Ciarlet-Raviart discretization of biharmonic problem \cite{Ciarlet1974} and is one of the least expensive methods in terms of the amount of degrees of freedom. The convergence analysis is presented under the framework of spectral approximation theory of compact operator \cite{BabuskaOsborn1991,Osborn1975MC} and the error analysis of a mixed finite element method for the Stokes problem \cite{Girault1986}. Although the existence of elastic transmission eigenvalues
is beyond our concern, we want to remark that there exist only a few studies on the existence of the elasticity transmission eigenvalues \cite{Charalambopoulos2002JE,  CharalambopoulosAnagnostopoulos2002JE,
BellisGuzina2010JE, BellisCakoniGuzina}. We hope that the numerical results can give some hints on the analysis of the elasticity transmission eigenvalue
problem.

In this paper, we study the $H^2$-nonconforming finite element method $B_{h0}^3$ for the elastic transmission eigenvalue problem. The method is first introduced in \cite{zhang2018} for the biharmonic equation with optimal convergence rate. The method does not correspond to a locally defined finite element with Ciarlet's triple but admits a set of local basis functions. It is applied in the Helmholtz transmission eigenvalue problem \cite{XiJiZhang2020} obtaining the optimal convergence rate. For the elastic transmission eigenvalue problem,  the tensorial structure is the main difficulty.  The connection between bi-elastic eigenvalue problem and biharmonic eigenvalue problem is established which make the theoretical analysis possible. The property can be inherited by the $B_{h0}^3$ element at the discrete level, and we design schemes accordingly.
In this paper, by a careful analysis, we reveal that when $\lambda$ is not big, the bi-elastic problem takes the same essence as that of a tensorial biharmonic equation. This observation can hint a stabilization formulation for general low-degree finite element such as the Morley element. We give a new Morley element in section 5.3 and the idea can be applied to the other low-degree finite element.

The rest of this paper is organized as follows. In section 2, we introduce the problem and a piecewise cubic finite element. Section 3 presents the numerical scheme and error estimate for the bi-elastic problem. Section 4 gives the error estimate of the elasticity transmission eigenvalue problem. Numerical examples are presented in the last section. We also compare our method with Morley element scheme.
\section{Preliminaries}
\subsection{Elastic transmission eigenvalue problem}
Before introducing the elastic transmission eigenvalue problem, we present some notations first. Let $\Omega\subset\mathbb{R}^2$ be a bounded convex Lipschitz domain and $\boldsymbol x=(x_1, x_2)^\top\in\mathbb R^2$. And all vectors will be denoted in bold script in the subsequent. We denote the displacement vector of the wave field by $\boldsymbol{u}(\boldsymbol{x})=(u_1(\boldsymbol{x}), u_2(\boldsymbol{x}))^\top$ and the displacement gradient tensor by
\[
 \nabla\boldsymbol{u}=\begin{bmatrix}
                       \partial_{x_1} u_1 & \partial_{x_2} u_1\\
                       \partial_{x_1} u_2 & \partial_{x_2} u_2
                      \end{bmatrix}.
\] The strain tensor $\varepsilon(\boldsymbol{u})$ is given by
\[
\varepsilon(\boldsymbol{u})=\frac{1}{2}(\nabla\boldsymbol{u}
+(\nabla\boldsymbol{u})^\top),
\]
and the stress tensor $\sigma(\boldsymbol{u})$ is given by the generalized Hooke law
\[
 \sigma(\boldsymbol{u})=2\mu\varepsilon(\boldsymbol{u})+\lambda {\rm
tr}(\varepsilon(\boldsymbol{u})){\rm I},
\]
where the Lam\'{e} parameters $\mu,\  \lambda$ are two positive constants and ${\rm I}\in\mathbb{R}^{2\times 2}$ is the identity
matrix. Writing the above equation out, we have
\begin{equation}\label{DefSigma}
 \sigma(\boldsymbol{u})=\begin{bmatrix}
  (\lambda+2\mu)\partial_{x_1} u_1 + \lambda \partial_{x_2} u_2 & \mu (\partial_{x_2} u_1 +
\partial_{x_1} u_2)\\
\mu (\partial_{x_1} u_2 + \partial_{x_2} u_1) & \lambda\partial_{x_1} u_1 +
(\lambda+2\mu)\partial_{x_2} u_2
 \end{bmatrix}.
\end{equation}

The two-dimensional elastic wave problem is described by the reduced Navier equation: Find
$\boldsymbol{u}$ with zero trace on $\partial \Omega$, such that
\begin{equation}\label{ElasticityLHS1}
\nabla\cdot\sigma(\boldsymbol{u})+\omega^2 \rho \boldsymbol{u}={\boldsymbol 0},
\quad\text{in}~\Omega \subset \mathbb{R}^2,
\end{equation}
where $\omega>0$ is the angular frequency and $\rho$ is the mass density.

The elasticity transmission eigenvalue problem is to find $\omega^2\neq0$ such that there exist non-trivial solutions $\boldsymbol{u}, \boldsymbol{v}$ satisfying
\begin{equation}\label{tep}
\left\{
\begin{array}{rcl}
\nabla\cdot\sigma(\boldsymbol{u})+\omega^2 \rho_0 \boldsymbol{u}={\boldsymbol 0} &\quad\text{in} ~
\Omega,\\
\nabla\cdot\sigma(\boldsymbol{v})+\omega^2 \rho_1 \boldsymbol{v}={\boldsymbol 0} &\quad\text{in} ~
\Omega,\\
\boldsymbol{u}=\boldsymbol{v} &\quad\text{on} ~ \partial \Omega,\\
\sigma(\boldsymbol{u}){\boldsymbol n}=\sigma(\boldsymbol{v}){\boldsymbol n} &\quad\text{on} ~
\partial \Omega,
\end{array}
\right.
\end{equation}
where $\sigma {\boldsymbol n}$ denotes the matrix multiplication of the
stress tensor $\sigma$ and the unit outward normal $\boldsymbol n$. We assume that the mass density distributions satisfy the following inequalities
\begin{equation}\label{pP}
q \le \rho_0({\boldsymbol x}) \le Q, \quad q_* \le
\rho_1({\boldsymbol x}) \le Q_*, \quad {\boldsymbol x} \in \Omega,
\end{equation}
where $q, q_*$ and $Q, Q_*$ are positive constants.  We also assume that the two density distributions are "non-intersecting"\cite{BellisCakoniGuzina}, i.e.
\begin{equation}
Q\leq1\leq q_*  \ \ \ \ or \ \ \ \ Q_*\leq1\leq q  \nonumber.
\end{equation}

Define the Sobolev space
\begin{equation}\label{VVVV}
V=\{\boldsymbol{\phi} \in (H^2(\Omega))^2: {\boldsymbol \phi} = {\boldsymbol 0} \text{
and } \sigma({\boldsymbol \phi}) {\boldsymbol \nu} = {\bf 0} ~ \text{on}~ \partial \Omega \}.
\end{equation}
Introducing a new variable $\boldsymbol{w}=\boldsymbol{u}-\boldsymbol{v}\in V$, following the same procedure in \cite{JiLiSun}, the system (\ref{tep})
can be written as follows
\begin{equation}
(\nabla\cdot\sigma+\omega^2\rho_1)(\rho_1-\rho_0)^{-1}(\nabla\cdot\sigma+\omega^2\rho_0)\boldsymbol{w}=\boldsymbol{0}.
\end{equation}
Let $\tau=\omega^2$. The corresponding weak formulation is to find $\tau$ and $\boldsymbol{w}\neq0\in V$ such that
\begin{equation}\label{WeakFormula}
\left((\rho_1-\rho_0)^{-1}(\nabla\cdot\sigma+\tau\rho_0)\boldsymbol{w},(\nabla\cdot\sigma+\tau\rho_1)\boldsymbol{\varphi}\right)=\boldsymbol{0},~~~~~\forall\boldsymbol{\varphi}\in V.
\end{equation}


%
%
\subsection{A piecewise cubic finite element}
Before introducing the finite element, we give some notations.
Assume $\mathcal{T}_h$ a shape regular mesh over $\Omega$ with mesh size $h$. Denote $\mathcal{X}_h$, $\mathcal{X}_h^i$, $\mathcal{X}_h^b$, $\mathcal{E}_h$, $\mathcal{E}_h^i$, $\mathcal{E}_h^b$ the vertices, interior vertices, boundary vertices, the set of edges, interior edges and boundary edges, respectively. For any edge $e\in \mathcal{E}_h$, denote the unit normal vector of $e$ by $\bf{n}_e$ and denote the unit tangential vector of $e$ by $\bf{t}_e$.
For a triangle $T\in\mathcal{T}_h$, we use $\mathcal{P}_k(T)$ to denote the set of polynomials not higher than k and $|T|$ means the area measurement of element $T$. On an edge $e$, $\mathcal{P}_k(e)$ and $|e|$ are defined similarly.

The cubic $H^2$-nonconforming finite element space $B_h^3$ \cite{zhang2018} can be defined
as follows:
\begin{eqnarray*}
B_h^3&=&\big\{v\in L^2(\Omega)\mid\ v|_T\in \mathcal{P}_3(T),\ v\ {\rm is\ continuous\ at\ vertices}\ a\in\mathcal{X}_h  \rm{\ and}\nonumber\\
&&\ \int_{e}\llbracket v\rrbracket\ ds =0,\ {\rm  and}\ \int_{e}p_e\llbracket \partial_n v \rrbracket\ ds =0,\ \forall\ p_e\in P_1(e),\ \forall\ e\in \mathcal{E}_h^i,\ \forall\ T\in\mathcal{T}_h\big\},
\end{eqnarray*}
where $\llbracket v\rrbracket$ represents the jump of the scalar function $v$ across e, and
\begin{eqnarray*} 
B_{h0}^3&=&\big\{v\in B_h^3\mid\ v(a)=0, \ a\in \mathcal{X}_h^{b};\ \int_{e} v\ ds =0,\ {\rm  and}\ \int_{e}p_e \partial_n v \ ds =0,\ \forall\ p_e\in P_1(e),\ \forall\ e\in \mathcal{E}_h^b\}.
\end{eqnarray*}
It should be noticed that $B^3_{h0}$
does not correspond to a locally defined finite element with Ciarlet$'$s triple but admits a set of
local basis functions. The explicit expression of basis functions can be referred to \cite{XiJiZhang2020}.

Besides, we denote some finite element spaces which will be used in the subsequent by
\begin{eqnarray}\nonumber
&V_{h}^k&=\left\{w\in H^1(\Omega): w|_{T}\in P_k(T),\ \forall\ T\in\mathcal{T}_h\right\},\ \ \ V_{h0}^k=V_{h}^k\cap H_0^1(\Omega),\ k\geqslant 1;\\ \nonumber
&P_h^k&=\left\{w\in L^2(\Omega): w|_{T}\in P_k(T),\ \forall\ T\in\mathcal{T}_h\right\},\ \ \ P_{h0}^k=P_{h}^k\cap L_0^2(\Omega),\ k\geqslant 0;\\ \nonumber
&(G_h^k)^2&=\left\{\boldsymbol{v}\in (L^2(\Omega))^2: \int_e p_e\llbracket v_l\rrbracket ds=0,\ \ \ \forall p_e\in P_{k-1}(e),\ \forall e\in \mathcal{E}_h^i,\ l=1,2\right\},\ k\geqslant 1; \\ \nonumber
&(G_{h0}^k)^2&=\left\{\boldsymbol{v}\in (G_h^k)^2: \int_e p_e v_l ds=0,\ \ \ \forall p_e\in P_{k-1}(e),\ \forall e\in \mathcal{E}_h^b,\ l=1,2\right\},\ k\geqslant 1.\\  \nonumber
&(S^k_h)^2&=(P_h^k)^2 \cap (H^1(\Omega))^2,\  (S^k_{h0})^2=(S^k_h)^2 \cap (H_0^1(\Omega))^2,\  k\geqslant 1.\\ \nonumber
&B_{h0}^2&=\left\{\phi_h:{ \phi_h}|_{T}\in \mbox{span}\{\lambda_1^2+\lambda_2^2+\lambda_3^2-2/3\},\ \forall\ T\in\mathcal{T}_h\right\},\nonumber
\end{eqnarray}
where $\lambda_i, \ i=1,2,3$ are the barycentric coordinates.

For the $B_{h0}^3$ space, we can verify the following results.
\begin{lemma}\cite{zhang2019}
For $\forall w_h,v_h\in B_{h0}^3$, we have $(\Delta_hw_h,\Delta_hv_h)=(\nabla_h^2w_h,\nabla_h^2v_h)$.
\end{lemma}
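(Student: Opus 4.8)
The identity $(\Delta_h w_h, \Delta_h v_h) = (\nabla_h^2 w_h, \nabla_h^2 v_h)$ is an integration-by-parts identity applied element by element, so the plan is to reduce everything to a single triangle $T$ and then control the resulting boundary terms using the degrees of freedom that define $B_{h0}^3$. Recall the pointwise algebraic fact that for a smooth scalar $\phi$ one has $(\Delta \phi)^2 = |\nabla^2\phi|^2 + 2(\partial_{x_1}^2\phi\,\partial_{x_2}^2\phi - (\partial_{x_1x_2}\phi)^2)$, and the cross term is a null Lagrangian: $\partial_{x_1}^2\phi\,\partial_{x_2}^2\psi - \partial_{x_1x_2}\phi\,\partial_{x_1x_2}\psi$ can be written as a divergence, so that $\int_T \big(\partial_{x_1}^2 w_h\,\partial_{x_2}^2 v_h + \partial_{x_2}^2 w_h\,\partial_{x_1}^2 v_h - 2\,\partial_{x_1x_2}w_h\,\partial_{x_1x_2}v_h\big)\,dx$ equals a boundary integral over $\partial T$. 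Hence $(\Delta_h w_h,\Delta_h v_h)_T - (\nabla_h^2 w_h,\nabla_h^2 v_h)_T$ is a boundary term on $\partial T$; summing over $T\in\mathcal T_h$ gives a sum over all edges, with the interior edges appearing twice (from the two adjoining triangles) and the boundary edges once.

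The key step is then to show that each edge contribution vanishes. On each edge $e$ the boundary integrand is a bilinear expression in the second derivatives of $w_h$ and $v_h$ restricted to $e$; after rewriting in the local $(\mathbf n_e,\mathbf t_e)$ frame, the integrand involves only tangential--tangential, tangential--normal and normal--normal second derivatives, and the "null-Lagrangian" structure forces it into a form built from $\partial_{t}(\text{something})$ along $e$ and from products of $\partial_n\partial_t$ and $\partial_t^2$ traces. For the interior edges, the two contributions combine into jump terms: one gets integrals over $e$ of quantities like $\llbracket \partial_n\partial_t w_h\rrbracket$, $\llbracket \partial_t^2 w_h\rrbracket$, $\llbracket \partial_n w_h\rrbracket$, paired against traces of derivatives of $v_h$. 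Since $w_h,v_h\in B_{h0}^3$ are piecewise cubic, $\partial_t^2 w_h|_e$ is affine on $e$ and $\partial_n w_h|_e$ is quadratic, while the defining conditions $\int_e\llbracket v\rrbracket\,ds = 0$ and $\int_e p_e\llbracket\partial_n v\rrbracket\,ds=0$ for all $p_e\in P_1(e)$, together with vertex continuity, pin down exactly the moments needed to annihilate each such pairing. I would verify this by checking, edge by edge, that every term is of the form $\int_e q_e\,\llbracket(\text{derivative})\rrbracket\,ds$ with $q_e$ of degree $\le 1$ on $e$ (using that the other factor, coming from a cubic, has tangential derivative structure of low degree), so that the jump conditions apply directly; for boundary edges the analogous single-sided moments vanish because of the $B_{h0}^3$ boundary conditions $v(a)=0$ at boundary vertices and $\int_e p_e\,\partial_n v\,ds=0$.

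The main obstacle is bookkeeping: one must identify precisely which edge moments of which derivatives show up in the boundary term, rewrite mixed Cartesian second derivatives in the edge-adapted frame without sign errors, and match each against the correct degree of freedom in the definition of $B_h^3$ and $B_{h0}^3$. A clean way to organize this is to first prove the identity for $w_h=v_h$ (i.e. $\|\Delta_h w_h\|^2 = \|\nabla_h^2 w_h\|^2$) by a scaling/trace argument, since there the boundary term is a sum of squares of edge jumps and the conditions visibly kill it, and then recover the bilinear identity by polarization: $(\Delta_h w_h,\Delta_h v_h) = \tfrac14\big(\|\Delta_h(w_h+v_h)\|^2 - \|\Delta_h(w_h-v_h)\|^2\big)$ and similarly for $\nabla_h^2$, using that $B_{h0}^3$ is a linear space so $w_h\pm v_h\in B_{h0}^3$. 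This reduces the whole proof to the single quadratic identity, which is the cleanest place to spend the careful work.
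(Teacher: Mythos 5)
Your starting point coincides with the paper's: the difference $(\Delta_h w_h,\Delta_h v_h)-(\nabla_h^2 w_h,\nabla_h^2 v_h)$ is a null Lagrangian in the second derivatives, and elementwise it reduces to terms of the form $(\nabla p,\mathrm{curl}\, q)_T$ with $p,q$ Cartesian first derivatives of $w_h,v_h$, i.e.\ piecewise quadratics belonging to $G_{h0}^2$; this is exactly the reduction behind Lemma \ref{lemma2.2} and the cited proof in \cite{zhang2019}. The gap is in your claim that, after summing over elements, every edge contribution is of the form $\int_e q_e\,\llbracket\cdot\rrbracket\,ds$ with $q_e\in P_1(e)$, so that the defining moments finish the job. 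Summing $\int_{\partial T}p\,\partial_{\boldsymbol t}q\,ds$ over $T$ gives, on an interior edge, $\int_e\bigl(\{p\}\llbracket\partial_{\boldsymbol t}q\rrbracket+\llbracket p\rrbracket\{\partial_{\boldsymbol t}q\}\bigr)ds$. The second term does vanish, since $\{\partial_{\boldsymbol t}q\}\in P_1(e)$ and the first derivatives of $B_{h0}^3$ functions inherit the $P_1(e)$ jump-moment conditions (a small argument via vertex continuity and tangential integration by parts that you should spell out). But in the first term $\{p\}\in P_2(e)$ is not an admissible test polynomial for the jump conditions; and if you integrate by parts along $e$ to trade $\llbracket\partial_{\boldsymbol t}q\rrbracket$ for $\llbracket q\rrbracket$, you create endpoint terms $\{p\}\llbracket q\rrbracket$ at the vertices of $e$, which nothing in the degrees of freedom controls, because first derivatives of $B_{h0}^3$ functions are not single-valued at vertices. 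The same problematic cross term is present in the quadratic case $w_h=v_h$, so the polarization device does not remove the difficulty.

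What is missing is the structural ingredient the paper actually uses: the decomposition $G_{h0}^2=S_{h0}^2\oplus B_{h0}^2$ from \cite{zhang2019}. Writing $q=q^1+q^2$ with $q^1$ globally continuous and $q^2$ the elementwise bubble whose edge traces are $L^2(e)$-orthogonal to $P_1(e)$, one kills the continuous part because $\partial_{\boldsymbol t}q^1$ is single-valued and lies in $P_1(e)$ (only the controlled term $\int_e\llbracket p\rrbracket\,\partial_{\boldsymbol t}q^1\,ds$ survives), and one kills the bubble part by integrating by parts the other way, $(\nabla p,\mathrm{curl}\,q^2)_T=\int_{\partial T}(\partial_{\boldsymbol t}p)\,q^2\,ds$, using $\partial_{\boldsymbol t}p|_e\in P_1(e)$ and the orthogonality of the bubble trace. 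Without this decomposition (or an equivalent device handling the $\{p\}\llbracket\partial_{\boldsymbol t}q\rrbracket$ terms and the multivalued vertex traces), the edge-by-edge bookkeeping you propose does not close, so the proposal as written has a genuine gap rather than merely tedious bookkeeping.
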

\begin{lemma}\label{lemma2.2}
For $\forall \bw{}_h,\bv{}_h\in (B_{h0}^3)^2$, it can be established that $(\nabla_h\dv_h\bw{}_h,\curl_h\rot_h\bv{}_h)=0$.
\end{lemma}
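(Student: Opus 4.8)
The plan is to reduce the claimed identity on $(B_{h0}^3)^2$ to an elementwise integration by parts, and then kill all the boundary contributions using the degrees-of-freedom constraints built into $B_{h0}^3$. For a generic smooth enough vector field $\bw$ we have the classical pointwise relation $\nabla\,\dv\bw - \curl\rot\bw = \Delta\bw$, or equivalently, for two fields $\bw,\bv$, the integrand $\nabla\dv\bw\cdot\curl\rot\bv$ can be rewritten after one integration by parts on each triangle $T$ as a sum of a volume term involving $\dv\bw\,\rot(\curl\rot\bv)=0$ (since $\rot\circ\curl\circ\rot$ annihilates, as $\rot\,\curl=0$ on scalars... here one must be careful with the exact operator chain) and boundary terms on $\partial T$ involving traces of $\dv\bw$ and of $\curl\rot\bv$ together with their normal/tangential derivatives. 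The cleaner route is probably to integrate by parts so as to move all derivatives off the $\bw$-factor: write, on each $T$,
\[
\int_T \nabla_h\dv\bw_h\cdot\curl_h\rot_h\bv_h\,dx
= -\int_T \dv\bw_h\,\big(\dv\curl_h\rot_h\bv_h\big)\,dx + \int_{\partial T}\dv\bw_h\,\big(\curl_h\rot_h\bv_h\cdot\bn\big)\,ds.
\]
Since $\dv\circ\curl = 0$ identically, the volume term vanishes on every $T$, leaving only $\sum_T\int_{\partial T}\dv\bw_h\,(\curl_h\rot_h\bv_h\cdot\bn)\,ds$.

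Next I would analyze these edge terms. On each triangle $\dv\bw_h\in\mathcal P_2(T)$ and each component of $\curl_h\rot_h\bv_h$ lies in $\mathcal P_1(T)$, so on an edge $e$ the product $\dv\bw_h\,(\curl_h\rot_h\bv_h\cdot\bn_e)$ restricted from $T$ is a polynomial of degree $\le 3$ on $e$. Summing over the two triangles sharing an interior edge $e$, the contribution is $\int_e \llbracket \dv\bw_h\,(\curl_h\rot_h\bv_h\cdot\bn_e)\rrbracket\,ds$; on a boundary edge it is a single-sided integral. The strategy is to expand $\dv\bw_h = \partial_{x_1}(w_h)_1+\partial_{x_2}(w_h)_2$ and $\curl_h\rot_h\bv_h\cdot\bn$ entirely in terms of first derivatives of the components of $\bw_h,\bv_h$; each such first derivative, restricted to an edge $e$, can be decomposed into a normal part $\partial_n$ and a tangential part $\partial_t$. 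The $B_{h0}^3$ constraints give exactly: (i) continuity of $\bw_h$ at vertices and $\int_e\llbracket (w_h)_l\rrbracket p_e\,ds=0$ for all $p_e\in P_1(e)$, hence $\llbracket (w_h)_l\rrbracket$ vanishes identically on $e$ as it is a cubic on $e$ vanishing at the endpoints and $L^2$-orthogonal to $P_1(e)$ — wait, a cubic vanishing at both endpoints and orthogonal to $P_1$ need not vanish; but its tangential derivative integrates to zero and combined with the $\partial_n$ conditions one controls the relevant moments — so in fact $\llbracket \partial_t (w_h)_l\rrbracket$ has vanishing mean on $e$, and $\int_e p_e\llbracket\partial_n(w_h)_l\rrbracket\,ds=0$ for $p_e\in P_1(e)$; analogously for $\bv_h$. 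The point is that the edge integrand, once written out, only pairs a $\le 1$-degree polynomial factor (coming from the $\curl\rot$ term, which is linear) against a first-derivative-of-$\bw_h$ factor, so each edge term is of the form $\int_e q_e\,\llbracket\partial_\bullet(w_h)_l\rrbracket\,ds$ with $q_e\in P_1(e)$ or a product structure that the DOFs annihilate after using $\rot\bw = \partial_{x_1}(w_h)_2 - \partial_{x_2}(w_h)_1$ is handled symmetrically between the two factors.

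I expect the main obstacle to be the bookkeeping of exactly which moments of the jumps are controlled: the $B_{h0}^3$ space does not make $\llbracket\bw_h\rrbracket$ or $\llbracket\nabla\bw_h\rrbracket$ vanish pointwise on edges, only certain low-order moments, so one must match the polynomial degree of the "test" factor on each edge (here $\le 1$ because $\curl_h\rot_h\bv_h$ is piecewise linear, and its normal trace on $e$ is in $P_1(e)$) against the available orthogonality ($p_e\in P_1(e)$ in the definition of $B_h^3$ and $B_{h0}^3$). The happy coincidence making the lemma true is precisely that $\curl\rot$ applied to a cubic produces something linear, so first-order moment conditions suffice. A symmetric argument integrating by parts the other way (moving derivatives onto the $\bv_h$-factor, using that $\rot\circ\nabla=0$) handles any leftover terms, and the symmetry between $\bw_h$ and $\bv_h$ can be used to split the original integrand so that each half is disposed of by one of the two integration-by-parts directions. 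Finally one invokes Lemma (the $\Delta_h$–$\nabla_h^2$ identity) or its vector analogue only if needed to tidy up; I suspect it is not actually required here, the identity being purely a consequence of $\dv\,\curl=0$, $\rot\,\nabla=0$, and the moment conditions.
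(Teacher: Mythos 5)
Your opening move (elementwise integration by parts using $\mathrm{div}\,\mathrm{curl}=0$, leaving only the boundary sums $\sum_{T}\int_{\partial T}(\nabla\cdot\boldsymbol{w}_h)\,\partial_t(\mathrm{rot}\,\boldsymbol{v}_h)\,ds$) is fine, and the pieces you can actually control are the ones where a \emph{single-valued} polynomial of degree $\le 1$ multiplies a jump: on boundary edges, and in the interior-edge contribution $\int_e\llbracket \nabla\cdot\boldsymbol{w}_h\rrbracket\,\{\partial_t\,\mathrm{rot}\,\boldsymbol{v}_h\}\,ds$, the moment conditions of $B_{h0}^3$ (normal-derivative moments directly, tangential-derivative moments after an integration by parts along $e$ using vertex continuity and $\int_e\llbracket\cdot\rrbracket\,ds=0$) do the job. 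But your key claim, that ``each edge term is of the form $\int_e q_e\,\llbracket\partial_\bullet (w_h)_l\rrbracket\,ds$ with $q_e\in P_1(e)$,'' is not correct on interior edges: $\partial_t(\mathrm{rot}\,\boldsymbol{v}_h)$ is two-valued across $e$, so the edge term is the jump of a \emph{product}, and after the splitting $\llbracket ab\rrbracket=\llbracket a\rrbracket\{b\}+\{a\}\llbracket b\rrbracket$ you are left with the cross term $\int_e\{\nabla\cdot\boldsymbol{w}_h\}\,\llbracket\partial_t\,\mathrm{rot}\,\boldsymbol{v}_h\rrbracket\,ds$. Here $\{\nabla\cdot\boldsymbol{w}_h\}$ is quadratic on $e$ (not linear), while the space only gives you orthogonality of $\llbracket\mathrm{rot}\,\boldsymbol{v}_h\rrbracket$ to $P_1(e)$; integrating by parts along $e$ kills the interior part but leaves endpoint terms $\{\nabla\cdot\boldsymbol{w}_h\}\llbracket\mathrm{rot}\,\boldsymbol{v}_h\rrbracket$ at the vertices, which do not vanish because first derivatives of $B_{h0}^3$ functions are \emph{not} continuous at vertices (only function values are). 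Your proposed rescue---symmetrizing and integrating by parts in the other direction using $\mathrm{rot}\,\nabla=0$---just produces the mirror-image cross terms with the two factors exchanged, together with the same kind of uncancelled vertex contributions, so it does not dispose of the leftover.

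This is exactly the point where the paper's proof uses an ingredient you do not have: it observes that $\nabla\cdot\boldsymbol{w}_h$ and $\mathrm{rot}\,\boldsymbol{v}_h$ belong to the weakly continuous scalar space $G_{h0}^k$ and invokes the decomposition $G_{h0}^k=S_{h0}^k\oplus B_{h0}^2$ of \cite{zhang2019}. For the conforming part $v_h^1\in S_{h0}^k$ the tangential derivative is single-valued on each edge, so only the controllable term $\sum_e\int_e\llbracket w_h\rrbracket\,\partial_{t_e}v_h^1\,ds$ appears; for the bubble part $v_h^2\in B_{h0}^2$ one integrates by parts in the \emph{opposite} direction and uses, elementwise on every single edge, that the bubble trace is $L^2(e)$-orthogonal to $P_1(e)$, so no jump-of-product splitting and no vertex terms ever arise. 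Without this decomposition (or an equivalent device to eliminate the $\{\cdot\}\llbracket\partial_t\cdot\rrbracket$ cross terms and their vertex contributions), your argument does not close; the rest of your outline (degree counting, which moments the DOFs control) is consistent with the paper, but the gap above is genuine.
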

\begin{proof}
First, if $w_h, v_h\in G_{h0}^k$, we can get
\begin{equation}(\nabla_h w_h,\curl_h v_h)=0.
\label{lemm221} \end{equation}
Since $G_{h0}^k=S^k_{h0}\oplus B_{h0}^2$ \cite{zhang2019}, $v{}_h$ can be written as
$$v{}_h=v{}_h^1+v{}_h^2, \ v{}_h^1\in S^k_{h0}, \ v{}_h^2 \in B_{h0}^2.$$
For $v{}_h^1$,
\begin{eqnarray}\label{lemm222}
(\nabla_h w{}_h, \curl_h v{}_h^1)&=&\sum_T (\nabla w{}_h, \curl v{}_h^1)_T=\sum_T \int_{\partial T} w{}_h \partial_{\bf{t}} v{}_h^1 \\ \nonumber
&=& \sum_e \int_e \llbracket w{}_h \rrbracket \partial_{\bf{t}_e} v{}_h^1=0.
\end{eqnarray}
here we use $\partial_{\bf{t}_e} v{}_h^1\in P_1(e)$ and $\int_e p_e \llbracket w{}_h \rrbracket=0, \ \forall p_e \in P_1(e)$.

For $v{}_h^2$,
\begin{eqnarray}\label{lemm223}
(\nabla_h w{}_h, \curl_h v{}_h^2)&=&\sum_T (\nabla w{}_h, \curl v{}_h^2)_T=\sum_T \int_{\partial T} \partial_{\bf{t}} w{}_h  v{}_h^2=0,
\end{eqnarray}
since $\partial_{\bf{t}} w{}_h|_{\partial T}  \in P_1(\partial _T)$ and $ v{}_h^2$ is bubble on the boudary.

The combination \eqref{lemm222} with \eqref{lemm223} leads to \eqref{lemm221}.

If $\bw{}_h\in (B_{h0}^3)^2$, then $\dv_h\bw{}_h, \ \rot_h\bv{}_h \in G_{h0}^k$,  the proof is complete.

\end{proof}

\section{A second order scheme for the bi-elastic problem}
Based on the above equation (\ref{ElasticityLHS1}), we construct the bi-elastic eigenvalue problem: Find $(K,\boldsymbol{w})\in \mathcal{C}\times V$ satisfying
\begin{equation}\label{ElasticEigenvalueProblem}
\left\{
\begin{array}{rcl}
\left(\nabla\cdot\sigma\right)(\beta\nabla\cdot\sigma(\boldsymbol{w}))&=&K\boldsymbol{w},\ \ \ \ \rm{in}\ \ \Omega,\\
{\boldsymbol w}&=&{\bf 0},\ \ \ \ \rm{on}\ \ \partial \Omega,\\
\sigma({\boldsymbol w}) {\boldsymbol \nu}&=&{\bf 0},\ \ \ \ \rm{on}\ \ \partial \Omega,
\end{array}
\right.
\end{equation}
where $\sigma {\boldsymbol \nu}$ denotes the matrix multiplication of the
stress tensor $\sigma$ and the unit outward normal $\boldsymbol \nu$. And $\beta(\boldsymbol{x})$ is a bounded smooth non-constant function, $\beta(\boldsymbol{x})\geqslant \beta_{\min}>0$.

The corresponding variational formulation is to find $(K, \boldsymbol{w})\in \mathcal{C}\times V$, such that
\begin{equation}\label{AuxiliaryVariational}
A_{\beta}(\boldsymbol{w},\boldsymbol{\varphi})\triangleq\left(\beta\nabla\cdot\sigma(\boldsymbol{w}),\nabla\cdot\sigma(\boldsymbol{\varphi})\right)=K(\boldsymbol{w},\boldsymbol{\varphi}),\ \ \ \forall \boldsymbol{\varphi}\in V.
\end{equation}
\begin{theorem}\label{CCoerciveness}
$\|\nabla\cdot\sigma(\boldsymbol{w})\|_0\cong\|\boldsymbol{w}\|_2,~\forall\boldsymbol{w}\in V$. ("$\cong$" denotes "$=$" up to a constant.)
\end{theorem}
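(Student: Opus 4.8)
The plan is to prove the two inequalities $\|\nabla\cdot\sigma(\boldsymbol w)\|_0\lesssim\|\boldsymbol w\|_2$ and $\|\boldsymbol w\|_2\lesssim\|\nabla\cdot\sigma(\boldsymbol w)\|_0$ separately. Both rest on the elementary identity
\[
\nabla\cdot\sigma(\boldsymbol w)=\mu\Delta\boldsymbol w+(\lambda+\mu)\nabla(\nabla\cdot\boldsymbol w),
\]
obtained from $\sigma(\boldsymbol w)=2\mu\varepsilon(\boldsymbol w)+\lambda(\nabla\cdot\boldsymbol w){\rm I}$ together with $\nabla\cdot\varepsilon(\boldsymbol w)=\tfrac12(\Delta\boldsymbol w+\nabla(\nabla\cdot\boldsymbol w))$. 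The upper bound is then immediate for every $\boldsymbol w\in(H^2(\Omega))^2$: each entry of $\Delta\boldsymbol w$ and of $\nabla(\nabla\cdot\boldsymbol w)$ is a second-order partial derivative of a component of $\boldsymbol w$ and the Lam\'e parameters are constants, so the triangle inequality gives $\|\nabla\cdot\sigma(\boldsymbol w)\|_0\le C\|\boldsymbol w\|_2$.

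For the reverse inequality I would first identify $V=(H_0^2(\Omega))^2$. The inclusion $\supseteq$ is obvious. For $\subseteq$, let $\boldsymbol\phi\in V$; since $\boldsymbol\phi=\boldsymbol 0$ on $\partial\Omega$, the tangential derivatives of $\boldsymbol\phi$ vanish on $\partial\Omega$, so $\nabla\boldsymbol\phi=(\partial_{\boldsymbol\nu}\boldsymbol\phi)\otimes\boldsymbol\nu$ there, whence $\nabla\cdot\boldsymbol\phi=\partial_{\boldsymbol\nu}\boldsymbol\phi\cdot\boldsymbol\nu$ and $\varepsilon(\boldsymbol\phi)\boldsymbol\nu=\tfrac12\big(\partial_{\boldsymbol\nu}\boldsymbol\phi+(\partial_{\boldsymbol\nu}\boldsymbol\phi\cdot\boldsymbol\nu)\boldsymbol\nu\big)$ on $\partial\Omega$. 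Substituting these into $\sigma(\boldsymbol\phi)\boldsymbol\nu=\boldsymbol 0$ gives $\mu\,\partial_{\boldsymbol\nu}\boldsymbol\phi+(\lambda+\mu)(\partial_{\boldsymbol\nu}\boldsymbol\phi\cdot\boldsymbol\nu)\boldsymbol\nu=\boldsymbol 0$ on $\partial\Omega$; taking the inner product with $\boldsymbol\nu$ yields $(\lambda+2\mu)(\partial_{\boldsymbol\nu}\boldsymbol\phi\cdot\boldsymbol\nu)=0$, and since $\lambda+2\mu>0$ this forces $\partial_{\boldsymbol\nu}\boldsymbol\phi\cdot\boldsymbol\nu=0$ and hence $\partial_{\boldsymbol\nu}\boldsymbol\phi=\boldsymbol 0$. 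Thus $\nabla\boldsymbol\phi=\boldsymbol 0$ on $\partial\Omega$ and $\boldsymbol\phi\in(H_0^2(\Omega))^2$. In particular, for $\boldsymbol w\in V$ the scalar $q:=\nabla\cdot\boldsymbol w$ lies in $H_0^1(\Omega)$.

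Next I would use the integration-by-parts identity $(\Delta\boldsymbol w,\nabla q)=\|\nabla q\|_0^2$, with $q=\nabla\cdot\boldsymbol w$, valid on $(H_0^2(\Omega))^2$: for $\boldsymbol w\in(C_c^\infty(\Omega))^2$ two integrations by parts give $(\Delta\boldsymbol w,\nabla q)=-(\nabla\cdot\Delta\boldsymbol w,q)=-(\Delta q,q)=\|\nabla q\|_0^2$, and both sides depend continuously on $\boldsymbol w$ in the $H^2$-norm, so the identity extends to $(H_0^2(\Omega))^2$ by density. Expanding the square and inserting this identity,
\[
\|\nabla\cdot\sigma(\boldsymbol w)\|_0^2=\mu^2\|\Delta\boldsymbol w\|_0^2+(\lambda+\mu)(\lambda+3\mu)\|\nabla q\|_0^2\ \ge\ \mu^2\|\Delta\boldsymbol w\|_0^2 ,
\]
using $\mu>0$ and $\lambda+\mu>0$. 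Finally, since $\Omega$ is convex, the $H^2$-regularity estimate for the Dirichlet Laplacian gives $\|\boldsymbol w\|_2\le C\|\Delta\boldsymbol w\|_0$ for $\boldsymbol w\in(H^2(\Omega)\cap H_0^1(\Omega))^2\supseteq V$; combining this with the last display yields $\|\boldsymbol w\|_2\le C\mu^{-1}\|\nabla\cdot\sigma(\boldsymbol w)\|_0$, and together with the upper bound this is the claimed equivalence.

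The step I expect to require the most care is the identification $V=(H_0^2(\Omega))^2$: one must check that the two boundary conditions defining $V$ — which at first glance look overdetermined for the second-order operator $\nabla\cdot\sigma$ — in fact pin down the entire trace of $\nabla\boldsymbol\phi$ on $\partial\Omega$, and the positivity $\lambda+2\mu>0$ of the Lam\'e parameters is used precisely here. Once that is in place, the integration by parts and the convex-domain elliptic regularity estimate are classical, so the remainder is routine.
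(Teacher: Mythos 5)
Your proof is correct, and while it rests on the same underlying mechanism as the paper's proof --- writing $\nabla\cdot\sigma(\boldsymbol{w})=\mu\Delta\boldsymbol{w}+(\lambda+\mu)\nabla(\nabla\cdot\boldsymbol{w})$ and exploiting that the boundary conditions render the cross term between $\Delta\boldsymbol{w}$ and $\nabla(\nabla\cdot\boldsymbol{w})$ harmless --- it reaches the conclusion by a noticeably different route. The paper passes through the splitting $\Delta=\nabla\nabla\cdot-\mathrm{curl}\,\mathrm{rot}$ and the $L^2$-orthogonality of the two parts (the continuous counterpart of Lemma \ref{lemma2.2}), obtaining $\mu^2\|\Delta\boldsymbol{w}\|_0^2\leq\|\nabla\cdot\sigma(\boldsymbol{w})\|_0^2\leq C\|\Delta\boldsymbol{w}\|_0^2$ and then identifying $\|\Delta\boldsymbol{w}\|_0$ with $\|\nabla^2\boldsymbol{w}\|_0$ on $(H^2\cap H_0^1)^2$; you instead expand the square directly, proving the key identity $(\Delta\boldsymbol{w},\nabla q)=\|\nabla q\|_0^2$ with $q=\nabla\cdot\boldsymbol{w}$ by density in $(H_0^2(\Omega))^2$, and you finish with the convex-domain $H^2$ estimate for the Dirichlet Laplacian (lower bound) and the plain triangle inequality (upper bound). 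Two features of your argument are genuine added value. First, you prove the identification $V=(H_0^2(\Omega))^2$ from the two boundary conditions, using $\lambda+2\mu>0$; the paper never states this, yet its proof needs exactly this fact both to kill the boundary terms in the orthogonality step and to justify the final norm equivalence, so making it explicit closes a real gap in the exposition. Second, your constants are the accurate ones: with the correct coefficient $\lambda+\mu$ in front of $\nabla(\nabla\cdot\boldsymbol{w})$, the splitting gives $(\lambda+2\mu)\nabla\nabla\cdot\boldsymbol{w}-\mu\,\mathrm{curl}\,\mathrm{rot}\,\boldsymbol{w}$ and hence the cross-term factor $(\lambda+2\mu)^2-\mu^2=(\lambda+\mu)(\lambda+3\mu)$, exactly as in your expansion, whereas the paper's displayed computation starts from $-\mu\Delta\boldsymbol{w}-\lambda\nabla\nabla\cdot\boldsymbol{w}$ and consequently carries $(\lambda+\mu)$ and $(\lambda+\mu)^2-\mu^2$ where $(\lambda+2\mu)$ and $(\lambda+2\mu)^2-\mu^2$ should appear; this slip does not affect the validity of the equivalence (all constants stay positive), but your bookkeeping is the one to trust.
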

\begin{proof}
Especially, we can obtain that
\begin{small}
\begin{eqnarray}\label{Elastic_Derivations}
\Big(\nabla\cdot\sigma(\boldsymbol{w}),\nabla\cdot\sigma(\boldsymbol{\varphi})\Big)&=&\Big(-\mu\Delta\boldsymbol{w}-\lambda\nabla\nabla\cdot(\boldsymbol{w}),-\mu\Delta\boldsymbol{\varphi}-\lambda\nabla\nabla\cdot(\boldsymbol{\varphi})\Big)\nonumber\\
&=&\Big(-(\lambda+\mu)\nabla\nabla\cdot\boldsymbol{w}+\mu\ curl\ rot\boldsymbol{w}, -(\lambda+\mu)\nabla\nabla\cdot\boldsymbol{\varphi}+\mu\ curl\ rot\boldsymbol{\varphi}\Big)\nonumber\\
&=&(\lambda+\mu)^2\Big(-\nabla\nabla\cdot\boldsymbol{w}, -\nabla\nabla\cdot\boldsymbol{\varphi}\Big)+\mu^2\Big(curl\ rot\boldsymbol{w},curl\ rot\boldsymbol{\varphi}\Big) \nonumber\\
&=&\mu^2(-\Delta\boldsymbol{w}, -\Delta\boldsymbol{\varphi})+[(\lambda+\mu)^2-\mu^2](-\nabla\nabla\cdot(\boldsymbol{w}), -\nabla\nabla\cdot(\boldsymbol{\varphi})) , ~\forall \boldsymbol{w},\boldsymbol{\varphi}\in V,
\end{eqnarray}
\end{small}
then we get
\begin{small}
\begin{eqnarray}
\mu^2\| -\Delta\boldsymbol{w}\|_0^2\leq \|\nabla\cdot\sigma(\boldsymbol{w}) \|_0^2  \leq (\lambda+\mu)^2\| -\Delta\boldsymbol{w}\|_0^2.
\end{eqnarray}
\end{small}
Further, combining the above equation with $\| -\Delta\boldsymbol{w}\|_0^2=\|\nabla^2\boldsymbol{w} \|_0, \boldsymbol{w} \in (H^2\cap H_0^1)^2  $ leads to the desired result.
\end{proof}
\begin{remark}
From the above derivations, we know that $A_\beta(\cdot,\cdot)$ is coercive on $V$. And it's easy to verify the boundedness of $A_\beta(\cdot,\cdot)$. The well-posedness of (\ref{AuxiliaryVariational}) follows by Lax-Milgram theorem.
\end{remark}

\begin{remark}
 The above theorem establishes the connection between bi-elastic eigenvalue problem and biharmonic eigenvalue problem. The theoretical analysis can be put down to the biharmonic eigenvalue problem. We give some details to make the paper more understandable.
\end{remark}

The bi-elastic source problem for (\ref{ElasticEigenvalueProblem}) is to find $\boldsymbol{w}\in V$ satisfying
\begin{equation}\label{ElasticBoundaryProblem}
\left\{
\begin{array}{rcl}
\left(\nabla\cdot\sigma\right)(\beta\nabla\cdot\sigma(\boldsymbol{w}))&=&\boldsymbol{f},\ \ \ \ \rm{in}\ \ \Omega,\\
{\boldsymbol w}&=&{\bf 0},\ \ \ \ \rm{on}\ \ \partial \Omega,\\
\sigma({\boldsymbol w}) {\boldsymbol \nu}&=&{\bf 0},\ \ \ \ \rm{on}\ \ \partial \Omega.
\end{array}
\right.
\end{equation}
The cubic finite element scheme for \eqref{ElasticBoundaryProblem} is defined as: find $\boldsymbol{w_h} \in (B^3_{h0})^2$, such that
\begin{equation}\label{Non-constantBoundaryValueProblemdis}
A_{\beta,h}(\boldsymbol{w_h},\boldsymbol{\varphi_h})\triangleq(\beta\left(\nabla\cdot\sigma\right)\boldsymbol{w_h},\left(\nabla\cdot\sigma\right)\boldsymbol{\varphi_h})=(\boldsymbol{f},\boldsymbol{\varphi_h}),\quad\forall\ \boldsymbol{\varphi_h}\in (B^3_{h0})^2.
\end{equation}
Define $\|\boldsymbol{w}_h\|_h=\left\{\sum\limits_{T\in\mathcal{T}_h}\int_T|\nabla\cdot\sigma(\boldsymbol{w}_h)|^2 dx_1dx_2\right\}^{\frac{1}{2}}$. In the following, we prove that $\|\boldsymbol{w}_h\|_h$ is a norm on $(B^3_{h0})^2$.

\begin{lemma}
$\|\boldsymbol{w}_h\|_h$ is a norm on $(B^3_{h0})^2$.
\end{lemma}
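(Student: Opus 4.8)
The plan is to note first that $\|\cdot\|_h$ is manifestly a seminorm on $(B^3_{h0})^2$: nonnegativity, absolute homogeneity and the triangle inequality are all inherited from the $L^2$-norm, since $\boldsymbol w_h\mapsto\big(\nabla\cdot\sigma(\boldsymbol w_h)|_T\big)_{T\in\mathcal T_h}$ is a linear map and $\|\boldsymbol w_h\|_h$ is the $L^2$-product norm of its image. So the only point to establish is definiteness: if $\|\boldsymbol w_h\|_h=0$ then $\boldsymbol w_h=\boldsymbol 0$. I would reach this in two moves — first reduce the hypothesis "$\nabla\cdot\sigma(\boldsymbol w_h)=0$ on every $T$" to "$\boldsymbol w_h$ is elementwise affine", then use the (non)conforming conditions defining $(B^3_{h0})^2$ to rigidify this to $\boldsymbol w_h\equiv\boldsymbol 0$.

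For the first move, observe that on each $T$ the pointwise identity from \eqref{Elastic_Derivations}, namely $\nabla\cdot\sigma(\boldsymbol w_h)=-(\lambda+\mu)\nabla\dv\boldsymbol w_h+\mu\,\curl\rot\boldsymbol w_h$, holds on the smooth piece $\boldsymbol w_h|_T$ (it only uses the definition of $\sigma$ and the planar identity $\Delta=\nabla\dv-\curl\rot$). Squaring, summing over $T$, and invoking Lemma~\ref{lemma2.2} componentwise with $\bw_h=\bv_h=\boldsymbol w_h$ to kill the cross term yields
$$\|\boldsymbol w_h\|_h^2=(\lambda+\mu)^2\|\nabla_h\dv_h\boldsymbol w_h\|_0^2+\mu^2\|\curl_h\rot_h\boldsymbol w_h\|_0^2 .$$
Since $\lambda,\mu>0$, the hypothesis forces $\nabla_h\dv_h\boldsymbol w_h=0$ and $\curl_h\rot_h\boldsymbol w_h=0$, hence $\Delta_h\boldsymbol w_h=\nabla_h\dv_h\boldsymbol w_h-\curl_h\rot_h\boldsymbol w_h=0$ elementwise. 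Applying the identity $(\Delta_hw_h,\Delta_hv_h)=(\nabla_h^2w_h,\nabla_h^2v_h)$ valid on $B^3_{h0}$ to each component $w_l\in B^3_{h0}$ ($l=1,2$) gives $\nabla_h^2w_l=0$, i.e. each $w_l$ is a piecewise affine element of $B^3_{h0}$. (Equivalently, this step is the scalar fact that $\|\Delta_h\cdot\|_0$ is a norm on $B^3_{h0}$.)

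For the second move I would unwind the defining conditions of $B^3_{h0}$ for a piecewise affine $w_l$: continuity at the vertices together with $\int_e\llbracket w_l\rrbracket\,ds=0$ forces $\llbracket w_l\rrbracket\equiv0$ on each interior edge $e$ (the jump is affine along $e$ and vanishes at both endpoints), so $w_l\in C^0(\overline\Omega)$; then $\partial_n w_l$ is constant on each side of $e$, so $\int_e\llbracket\partial_n w_l\rrbracket\,ds=0$ forces $\llbracket\partial_n w_l\rrbracket\equiv0$, whence $\nabla w_l$ is continuous across all interior edges; being piecewise constant and continuous on the connected mesh, $\nabla w_l$ is globally constant, i.e. $w_l$ is one affine function on $\Omega$; finally $w_l(a)=0$ at every boundary vertex $a\in\mathcal X_h^b$, of which there are at least three non-collinear, forces $w_l\equiv0$. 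Hence $\boldsymbol w_h=\boldsymbol 0$, which completes the proof. The algebra of the first move is harmless; the step I expect to need the most care is the second — verifying that the only piecewise affine member of $B^3_{h0}$ is zero — since that is precisely where the nonconforming moment conditions together with the boundary conditions must be used, and it is this discrete rigidity that upgrades $\|\cdot\|_h$ from a seminorm to a genuine norm.
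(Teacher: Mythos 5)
Your proof is correct and takes essentially the same route as the paper: the heart of both arguments is the reduction, via the orthogonality of Lemma~\ref{lemma2.2} and the identity $(\Delta_h w_h,\Delta_h v_h)=(\nabla_h^2 w_h,\nabla_h^2 v_h)$, of $\|\cdot\|_h$ to the broken $H^2$-seminorm $\|\nabla_h^2\cdot\|_0$. The only difference is that you also spell out the final rigidity step (a piecewise affine member of $B^3_{h0}$ must vanish), which the paper leaves implicit by stopping at the bound $\|\boldsymbol{w}_h\|_h^2\geq\mu^2\|\nabla_h^2\boldsymbol{w}_h\|_0^2$.
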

\begin{proof}
Using Lemma \ref{lemma2.2}, we obtain
 \begin{small}
\begin{eqnarray}\label{Elastic_Derivationsdis}
\|\boldsymbol{w}_h\|_h^2&=&\sum_{T\in\mathcal{T}_h}\Big(\nabla\cdot\sigma(\boldsymbol{w}_h), \nabla\cdot\sigma(\boldsymbol{w}_h)\Big)\nonumber\\
&=&\sum_{T\in\mathcal{T}_h}\Big(-(\lambda+\mu)\nabla\nabla\cdot\boldsymbol{w_h}+\mu\ curl\ rot\boldsymbol{w_h}, -(\lambda+\mu)\nabla\nabla\cdot\boldsymbol{w_h}+\mu\ curl\ rot\boldsymbol{w_h}\Big)\nonumber\\
&=&\sum_{T\in\mathcal{T}_h}(\lambda+\mu)^2\Big(-\nabla\nabla\cdot\boldsymbol{w_h}, -\nabla\nabla\cdot\boldsymbol{w_h}\Big)+\sum_{T\in\mathcal{T}_h}\mu^2\Big(curl\ rot\boldsymbol{w_h},curl\ rot\boldsymbol{w_h}\Big) \nonumber\\
&=&\sum_{T\in\mathcal{T}_h}(\mu^2(-\Delta\boldsymbol{w_h}, -\Delta\boldsymbol{w_h})+\sum_{T\in\mathcal{T}_h}([(\lambda+\mu)^2-\mu^2](-\nabla\nabla\cdot(\boldsymbol{w_h}), -\nabla\nabla\cdot(\boldsymbol{w_h})) \nonumber\\
&\geq&\mu^2\| \Delta_h \boldsymbol{w_h} \|_0^2=\mu^2\| \nabla_h^2 \boldsymbol{w_h} \|_0^2
, ~\forall \boldsymbol{w_h}\in (B^3_{h0})^2.
\end{eqnarray}
\end{small}
\end{proof}

\begin{lemma}
The finite element approximation (\ref{Non-constantBoundaryValueProblemdis}) is well-posed.
\end{lemma}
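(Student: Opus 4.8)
The plan is to invoke the Lax--Milgram theorem on the finite-dimensional space $(B^3_{h0})^2$ equipped with the norm $\|\cdot\|_h$, so the whole argument reduces to checking that $A_{\beta,h}(\cdot,\cdot)$ is bounded and coercive with respect to $\|\cdot\|_h$, and that the right-hand side functional $\boldsymbol{\varphi}_h\mapsto(\boldsymbol f,\boldsymbol{\varphi}_h)$ is continuous. The previous lemma already establishes that $\|\cdot\|_h$ is a norm on $(B^3_{h0})^2$, which is what makes this approach legitimate.

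First I would treat coercivity. By the definition of $A_{\beta,h}$ and the hypothesis $\beta(\boldsymbol x)\geqslant\beta_{\min}>0$, one immediately has
\begin{equation*}
A_{\beta,h}(\boldsymbol{w}_h,\boldsymbol{w}_h)=\sum_{T\in\mathcal{T}_h}\int_T\beta\,|\nabla\cdot\sigma(\boldsymbol{w}_h)|^2\,dx_1dx_2\geqslant\beta_{\min}\|\boldsymbol{w}_h\|_h^2,
\end{equation*}
so coercivity is free. For boundedness, since $\beta$ is bounded, say $\beta(\boldsymbol x)\leqslant\beta_{\max}$, the Cauchy--Schwarz inequality gives $|A_{\beta,h}(\boldsymbol{w}_h,\boldsymbol{\varphi}_h)|\leqslant\beta_{\max}\|\boldsymbol{w}_h\|_h\|\boldsymbol{\varphi}_h\|_h$. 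For the linear functional, $|(\boldsymbol f,\boldsymbol{\varphi}_h)|\leqslant\|\boldsymbol f\|_0\|\boldsymbol{\varphi}_h\|_0$, and then one needs a discrete Poincar\'e-type bound $\|\boldsymbol{\varphi}_h\|_0\lesssim\|\boldsymbol{\varphi}_h\|_h$ valid on $(B^3_{h0})^2$; this follows from the broken-norm estimate $\|\boldsymbol{\varphi}_h\|_h\gtrsim\|\nabla_h^2\boldsymbol{\varphi}_h\|_0$ proved in the preceding lemma together with the fact that $\|\cdot\|_h$ controls $\|\nabla_h^2\cdot\|_0$, and a broken Poincar\'e--Friedrichs inequality for the $B^3_{h0}$ space (the weak continuity of function values at vertices and of the edge means of normal derivatives, plus the boundary conditions, are exactly what is needed).

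The main obstacle, such as it is, is the discrete embedding $\|\boldsymbol{\varphi}_h\|_0\lesssim\|\boldsymbol{\varphi}_h\|_h$: one must be careful that the constant is $h$-independent so that the well-posedness is uniform, which is what is needed for the later error analysis. This is a standard property of the $B^3_{h0}$ nonconforming space and can either be cited from \cite{zhang2018} or derived from the broken Poincar\'e inequality; I would state it as the one nontrivial ingredient and otherwise let Lax--Milgram close the argument. Since the space is finite-dimensional, coercivity alone already yields existence and uniqueness, but recording the boundedness constants explicitly is worthwhile for the stability estimate $\|\boldsymbol{w}_h\|_h\leqslant\beta_{\min}^{-1}C_P\|\boldsymbol f\|_0$ that feeds into Section~4.
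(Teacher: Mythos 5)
Your proposal is correct and follows essentially the same route as the paper: verify that $A_{\beta,h}(\cdot,\cdot)$ is coercive and bounded with respect to the discrete norm $\|\cdot\|_h$ (already shown to be a norm on $(B^3_{h0})^2$) and conclude by Lax--Milgram. Your extra care about the discrete Poincar\'e-type bound $\|\boldsymbol{\varphi}_h\|_0\lesssim\|\boldsymbol{\varphi}_h\|_h$ for the right-hand side functional is a detail the paper leaves implicit, and it is a worthwhile addition since an $h$-independent constant is exactly what the later error analysis needs.
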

\begin{proof}
By the definition of $\|\boldsymbol{w}_h\|_h$, it's easy to know that $A_{\beta,h}(\cdot,\cdot)$ is coercive and bounded. By the Lax-Milgram theorem, (\ref{Non-constantBoundaryValueProblemdis}) is well-posed. The proof is complete.
\end{proof}

\begin{lemma}(c.f.\cite{zhang2018})\label{AuxiliaryOperator}
For $\forall\ \boldsymbol{w}\in (H_0^2(\Omega)\cap H^k(\Omega))^2(k=3,4)$, there exists a positive constant $C$ such that
\begin{equation}
\inf_{\boldsymbol{v_h}\in (B^3_{h0})^2} |\boldsymbol{w}-\boldsymbol{v_h}|_{2,h}\leq ch^{k-2} |\boldsymbol{w}|_{k,\Omega}.
\end{equation}
\end{lemma}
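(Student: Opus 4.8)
The plan is to prove Lemma~\ref{AuxiliaryOperator} by constructing an explicit interpolation-type operator into $(B_{h0}^3)^2$ componentwise, so that the vector estimate reduces to the scalar estimate for a single component $w\in H_0^2(\Omega)\cap H^k(\Omega)$. Since $(B_{h0}^3)^2$ is just the Cartesian square of $B_{h0}^3$, and $|\boldsymbol{w}-\boldsymbol{v}_h|_{2,h}^2 = \sum_{l=1}^2 |w_l - (v_h)_l|_{2,h}^2$, it suffices to establish that for scalar $w\in H_0^2(\Omega)\cap H^k(\Omega)$, $k=3,4$, there is $v_h\in B_{h0}^3$ with $|w-v_h|_{2,h}\le ch^{k-2}|w|_{k,\Omega}$; the vector bound then follows by summing and the equivalence of $\ell^2$ norms in $\mathbb{R}^2$.

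For the scalar case I would invoke the approximation property of $B_{h0}^3$ established in \cite{zhang2018}. The natural route is: first recall that $B_h^3$ contains, elementwise, all of $\mathcal{P}_3(T)$, so by the Bramble–Hilbert lemma a local $L^2$- or $H^2$-projection onto $\mathcal{P}_3(T)$ on each element already gives the optimal local estimate $|w-\Pi_T w|_{2,T}\le ch^{k-2}|w|_{k,T}$ for $k=3,4$. The work is to patch these local pieces into a genuine member of $B_{h0}^3$, i.e.\ one satisfying the vertex-continuity conditions, the edge moment conditions $\int_e\llbracket v\rrbracket\,ds=0$ and $\int_e p_e\llbracket\partial_n v\rrbracket\,ds=0$ for $p_e\in P_1(e)$, and the homogeneous boundary conditions $v(a)=0$ at boundary vertices together with $\int_e v\,ds = \int_e p_e\partial_n v\,ds=0$ on boundary edges. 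This is exactly the content of the interpolation/quasi-interpolation operator built in \cite{zhang2018}; I would cite it and record that its error in the broken $H^2$ seminorm is $O(h^{k-2})$ for $w\in H_0^2\cap H^k$. Strictly one should also note that $w\in H_0^2(\Omega)$ guarantees the boundary functionals of $w$ vanish, so the operator's boundary constraints are consistent with $w$ and no loss of accuracy occurs near $\partial\Omega$.

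The key steps, in order, are: (1) reduce the vector estimate to the scalar estimate by working componentwise; (2) state the scalar approximation result for $B_{h0}^3$ from \cite{zhang2018}, namely existence of an interpolation operator $\Pi_h: H_0^2(\Omega)\cap H^k(\Omega)\to B_{h0}^3$ with $|w-\Pi_h w|_{2,h}\le ch^{k-2}|w|_{k,\Omega}$; (3) apply it to $w_1,w_2$ and set $\boldsymbol{v}_h=(\Pi_h w_1,\Pi_h w_2)^\top\in (B_{h0}^3)^2$; (4) conclude $|\boldsymbol{w}-\boldsymbol{v}_h|_{2,h}^2=\sum_l|w_l-\Pi_h w_l|_{2,h}^2\le c^2 h^{2(k-2)}\sum_l|w_l|_{k,\Omega}^2=c^2h^{2(k-2)}|\boldsymbol{w}|_{k,\Omega}^2$.

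The main obstacle, assuming \cite{zhang2018} is available, is not really an obstacle here: the lemma is explicitly marked ``c.f.\ \cite{zhang2018}'', so the serious analytic work — constructing the operator and verifying that the nonconforming jump and boundary functionals do not spoil optimality, which requires care because $B_{h0}^3$ has no Ciarlet triple and one must argue via the local basis functions rather than a standard nodal interpolant — has been done there. If one instead wanted a self-contained proof, the genuinely delicate point would be controlling the edge terms $\int_e p_e\llbracket\partial_n v_h\rrbracket\,ds$ after local projection, i.e.\ showing the global patching can be done while preserving the $O(h^{k-2})$ rate; this is where a scaling/trace argument on each edge, combined with the Bramble–Hilbert estimate on the two adjacent elements, is needed. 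For the purposes of this paper I would simply cite the result and restrict attention to the componentwise reduction, which is the only new (if trivial) ingredient.
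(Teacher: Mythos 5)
Your proposal is correct and matches the paper's treatment: the paper gives no proof of this lemma, simply citing \cite{zhang2018} for the scalar approximation property of $B_{h0}^3$, and your argument amounts to that citation plus the straightforward componentwise reduction for the vector-valued case. The additional remarks on how one would build the quasi-interpolant and handle the edge and boundary functionals are consistent with the construction in \cite{zhang2018} but are not needed beyond the citation.
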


\begin{lemma}\label{boundary1}
Assume $\mathcal{T}_h$ be a shape regular mesh over $\Omega$ with mesh size $h$. There exists a constant $C>0$ such that for $\forall \boldsymbol{w}\in(H_0^2(\Omega)\bigcap H^k(\Omega))^2\ (k=3,4)$, it holds that
\begin{equation}
\sum_{T\in\mathcal{T}_h}\int_{\partial T}\left(\sigma(\nabla\cdot\sigma(\boldsymbol{w}))\boldsymbol{n}\right)\cdot\boldsymbol{v_h}ds\leqslant C h^{k-2}|\boldsymbol{w}|_{k,\Omega}|\boldsymbol{v_h}|_{2,\Omega}. \nonumber
\end{equation}
\end{lemma}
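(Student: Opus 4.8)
The final statement, Lemma~\ref{boundary1}, is exactly the nonconforming \emph{consistency} boundary term that appears when one integrates by parts twice in $A_{\beta,h}(\boldsymbol{w},\boldsymbol{v_h})$, and the natural way to estimate it is to reorganize it over edges and cash in the two moment conditions built into $(B^3_{h0})^2$. Recall that a function in $B^3_{h0}$ has, on every interior edge $e$, a jump with vanishing zeroth moment ($\int_e\llbracket\,\cdot\,\rrbracket\,ds=0$) which is moreover continuous — hence vanishing — at the endpoints of $e$ (vertex continuity), and on every boundary edge the function itself has vanishing zeroth moment and vanishes at the endpoints. These are precisely the properties I would use.

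Write $\boldsymbol\Sigma=\sigma(\nabla\cdot\sigma(\boldsymbol{w}))$, a symmetric tensor field whose entries are third-order derivatives of $\boldsymbol{w}$. Since $\boldsymbol{w}$ is globally defined, the traction $\boldsymbol\Sigma\,\boldsymbol n_e$ is single-valued on each edge, so
\[
\sum_{T\in\mathcal T_h}\int_{\partial T}(\boldsymbol\Sigma\,\boldsymbol n)\cdot\boldsymbol{v_h}\,ds=\sum_{e\in\mathcal E_h^i}\int_e(\boldsymbol\Sigma\,\boldsymbol n_e)\cdot\llbracket\boldsymbol{v_h}\rrbracket\,ds+\sum_{e\in\mathcal E_h^b}\int_e(\boldsymbol\Sigma\,\boldsymbol n_e)\cdot\boldsymbol{v_h}\,ds ;
\]
on a boundary edge $\boldsymbol{v_h}$ itself satisfies the same two conditions that an interior jump does, so it is enough to treat a generic edge $e$ with patch $\omega_e$ (the one or two triangles meeting $e$) and datum $\boldsymbol\phi_e$ equal to $\llbracket\boldsymbol{v_h}\rrbracket$ or to $\boldsymbol{v_h}|_e$. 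On such an edge I use the vanishing zeroth moment $\int_e\boldsymbol\phi_e\,ds=\boldsymbol 0$ to subtract from $\boldsymbol\Sigma\,\boldsymbol n_e$ its edge-mean $\boldsymbol c_e$ and then apply Cauchy--Schwarz on $e$: $|\int_e(\boldsymbol\Sigma\,\boldsymbol n_e)\cdot\boldsymbol\phi_e|\le\|\boldsymbol\Sigma\,\boldsymbol n_e-\boldsymbol c_e\|_{0,e}\,\|\boldsymbol\phi_e\|_{0,e}$. For the first factor, the scaled trace inequality together with the Bramble--Hilbert lemma give $\|\boldsymbol\Sigma\,\boldsymbol n_e-\boldsymbol c_e\|_{0,e}\lesssim h^{1/2}|\boldsymbol\Sigma|_{1,\omega_e}\lesssim h^{1/2}|\boldsymbol{w}|_{4,\omega_e}$, using that $\sigma(\nabla\cdot\sigma(\cdot))$ is a constant-coefficient third-order operator. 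For the second factor I run two one-dimensional Poincar\'e inequalities along $e$: first $\|\boldsymbol\phi_e\|_{0,e}\lesssim h\,\|\partial_{\mathbf t}\boldsymbol\phi_e\|_{0,e}$ (zeroth moment vanishes); then, since $\boldsymbol\phi_e$ vanishes at the endpoints of $e$, the function $\partial_{\mathbf t}\boldsymbol\phi_e=\llbracket\partial_{\mathbf t}\boldsymbol{v_h}\rrbracket$ again has vanishing mean, so $\|\partial_{\mathbf t}\boldsymbol\phi_e\|_{0,e}\lesssim h\,\|\llbracket\partial_{\mathbf t\mathbf t}\boldsymbol{v_h}\rrbracket\|_{0,e}$; and finally the discrete inverse trace inequality for elementwise cubics yields $\|\llbracket\partial_{\mathbf t\mathbf t}\boldsymbol{v_h}\rrbracket\|_{0,e}\lesssim h^{-1/2}|\boldsymbol{v_h}|_{2,\omega_e}$. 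Hence $\|\boldsymbol\phi_e\|_{0,e}\lesssim h^{3/2}|\boldsymbol{v_h}|_{2,\omega_e}$, the product of the two factors is $\lesssim h^{2}|\boldsymbol{w}|_{4,\omega_e}|\boldsymbol{v_h}|_{2,\omega_e}$, and summing over edges with a discrete Cauchy--Schwarz inequality and the finite overlap of the patches $\{\omega_e\}$ (shape regularity) gives the bound for $k=4$.

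For $k=3$ I would carry out the same scheme with one power of $h$ fewer to spare. Now only $\nabla\cdot\sigma(\boldsymbol{w})\in(H^1(\Omega))^2$ is available, so before estimating I integrate by parts along each edge to transfer the edge-tangential derivatives off $\nabla\cdot\sigma(\boldsymbol{w})$ (the endpoint contributions drop because $\boldsymbol\phi_e$ vanishes there); after this step every surviving factor coming from $\boldsymbol{w}$ is an $L^2$-trace of $\nabla\cdot\sigma(\boldsymbol{w})$ minus a polynomial, estimated by $\lesssim h^{1/2}|\boldsymbol{w}|_{3,\omega_e}$, while the companion factor coming from $\boldsymbol{v_h}$ is $\|\llbracket\partial_{\mathbf t}\boldsymbol{v_h}\rrbracket\|_{0,e}\lesssim h^{1/2}|\boldsymbol{v_h}|_{2,\omega_e}$; the product is $\lesssim h\,|\boldsymbol{w}|_{3,\omega_e}|\boldsymbol{v_h}|_{2,\omega_e}$, and summation finishes the proof.

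The hard part is squeezing out the \emph{sharp} power $h^{k-2}$: a plain Cauchy--Schwarz, without first subtracting the edge-mean of the traction and without the second Poincar\'e step, loses at least one power of $h$, so the proof must genuinely use \emph{both} moment conditions of $(B^3_{h0})^2$ — the vanishing zeroth moment of the jump and the vertex continuity. A related, more delicate point is regularity: for $k=4$ the field $\boldsymbol\Sigma\in(H^1(\Omega))^{2\times2}$ has honest $L^2$-edge traces, whereas for $k=3$ it is only in $(L^2(\Omega))^{2\times2}$, so the boundary expression must be read through the consistency identity (equivalently, only after the edge-tangential integration by parts) so that nothing worse than a trace of $\nabla\cdot\sigma(\boldsymbol{w})\in(H^1(\Omega))^2$ is ever touched. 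The accompanying tensor bookkeeping — Hooke's law, symmetry of $\sigma$, the decomposition of $\boldsymbol\Sigma\,\boldsymbol n_e$ into normal and tangential tractions used in that integration by parts — is routine and I would not dwell on it.
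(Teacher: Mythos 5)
Your $k=4$ argument is sound and is essentially the standard edge-wise consistency estimate: rewrite the element-boundary sum over edges against $\llbracket\boldsymbol{v_h}\rrbracket$, use $\int_e\llbracket\boldsymbol{v_h}\rrbracket\,ds=0$ both to subtract the edge mean of the traction (trace inequality plus Bramble--Hilbert, giving $h^{1/2}|\boldsymbol{w}|_{4,\omega_e}$) and to run the two Poincar\'e steps along $e$ (vertex continuity makes $\llbracket\partial_{\mathbf t}\boldsymbol{v_h}\rrbracket$ mean-free), and close with an inverse trace estimate, yielding $h^{3/2}|\boldsymbol{v_h}|_{2,\omega_e}$ and hence the $h^{2}$ rate. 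The paper itself offers no argument here beyond citing Lemma 15 of \cite{zhang2018}, so for $k=4$ your write-up is a legitimate, self-contained version of the same idea.

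The $k=3$ case, however, has a genuine gap. Writing $\boldsymbol{z}=\nabla\cdot\sigma(\boldsymbol{w})\in (H^1(\Omega))^2$, the traction $\sigma(\boldsymbol{z})\boldsymbol{n}_e$ is \emph{not} built only from tangential derivatives of $\boldsymbol{z}$: in the local frame its normal and tangential components contain $\partial_{\boldsymbol n}z_n$ and $\partial_{\boldsymbol n}z_t$ (e.g.\ $(\sigma(\boldsymbol z)\boldsymbol n)\cdot\boldsymbol n=(\lambda+2\mu)\partial_{\boldsymbol n}z_n+\lambda\,\partial_{\boldsymbol t}z_t$), and these normal-derivative terms cannot be transferred onto $\boldsymbol{\varphi}_e$ by integration by parts along the edge. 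For $\boldsymbol{w}\in (H^3)^2$ they are only $L^2(\Omega)$ functions, so they have no $L^2(e)$ trace, and your claim that ``every surviving factor coming from $\boldsymbol{w}$ is an $L^2$-trace of $\nabla\cdot\sigma(\boldsymbol{w})$'' fails; indeed for $k=3$ the left-hand side of the lemma is not even well defined edge by edge without further interpretation. Handling this case requires a different mechanism than the one you sketch: either prove the bound for smooth $\boldsymbol{w}$ and control the full consistency functional (using the equation, i.e.\ $\nabla\cdot\sigma(\beta\nabla\cdot\sigma(\boldsymbol{w}))=\boldsymbol{f}\in (L^2)^2$, so the offending term can be integrated back into the elements), or use an enriching/averaging operator, or follow the specific device of Lemma 15 in \cite{zhang2018}; as it stands, your $k=3$ paragraph does not deliver the claimed $O(h)$ bound.
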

\begin{proof}
 The proof follows the same idea in Lemma 15 of \cite{zhang2018}, we ignore the details.
\end{proof}

\begin{lemma}\label{boundary2}
Under the assumption of Lemma \ref{boundary1}, there exists a constant $C>0$ such that for $\forall \boldsymbol{w}\in(H_0^2(\Omega)\bigcap H^k(\Omega))^2\ (k=3,4)$, it holds that
\begin{equation}
\sum_{T\in\mathcal{T}_h}2\mu\int_{\partial T}(\nabla\cdot\sigma(\boldsymbol{w}))\cdot\left(\varepsilon(\boldsymbol{v_h})\cdot\boldsymbol{n}\right)ds+\sum_{T\in\mathcal{T}_h}\lambda\int_{\partial T}\left(\nabla\cdot\sigma(\boldsymbol{w})\right)\cdot\left(\nabla\cdot\boldsymbol{v_h}\right)\cdot\boldsymbol{n}ds\leqslant C h^{k-2}|\boldsymbol{w}|_{k,\Omega}|\boldsymbol{v_h}|_{2,T}. \nonumber
\end{equation}
\begin{proof}
 The proof follows the same idea in Lemma 15 and 16 of \cite{zhang2018}, we ignore the details.
\end{proof}
\end{lemma}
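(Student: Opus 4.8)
The plan is to recognize the two sums as the nonconformity (consistency) boundary terms of the cubic scheme: they are exactly what is left over when $A_{\beta,h}(\boldsymbol{w},\boldsymbol{v_h})$ is integrated by parts element by element and one uses $(\nabla\cdot\sigma)(\beta\,\nabla\cdot\sigma(\boldsymbol{w}))=\boldsymbol{f}$. The first step is to pass from $\sum_{T}\int_{\partial T}$ to a sum over edges. Since $\boldsymbol{w}\in(H^{k}(\Omega))^{2}$ with $k\ge 3$, one has $\nabla\cdot\sigma(\boldsymbol{w})\in(H^{k-2}(\Omega))^{2}\subset(H^{1}(\Omega))^{2}$, so its trace on every $e\in\mathcal{E}_h$ is single-valued; hence on an interior edge the two element contributions combine into integrals of $\nabla\cdot\sigma(\boldsymbol{w})$ against the jumps $\llbracket\varepsilon(\boldsymbol{v_h})\boldsymbol{n}_e\rrbracket$ and $\llbracket\nabla\cdot\boldsymbol{v_h}\rrbracket$, while on a boundary edge only the trace of $\boldsymbol{v_h}$ and of its first derivatives appears. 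It then suffices to bound one generic edge integral $\int_{e}\boldsymbol{\Phi}\cdot\boldsymbol{r}_h\,ds$, with $\boldsymbol{\Phi}$ a component of the trace of $\nabla\cdot\sigma(\boldsymbol{w})$ on $e$ and $\boldsymbol{r}_h$ a jump (or, on boundary edges, a trace) of a first derivative of the polynomial $\boldsymbol{v_h}$, and then to sum over $\mathcal{E}_h$.

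The core step is to extract enough orthogonality from $\boldsymbol{r}_h$. On a fixed edge $e$ I would expand $\varepsilon(\boldsymbol{v_h})\boldsymbol{n}_e$ and $\nabla\cdot\boldsymbol{v_h}$ in the local frame $(\boldsymbol{n}_e,\boldsymbol{t}_e)$, so that each is a combination of $\partial_{\boldsymbol{n}_e}\boldsymbol{v_h}$, $\partial_{\boldsymbol{t}_e}(\boldsymbol{v_h}\cdot\boldsymbol{n}_e)$ and $\partial_{\boldsymbol{t}_e}(\boldsymbol{v_h}\cdot\boldsymbol{t}_e)$. The claim is that every jump so produced is $L^{2}(e)$-orthogonal to $P_{1}(e)$. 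For the normal-derivative parts this is immediate from the definition of $B_{h0}^{3}$, namely $\int_{e}p_e\,\llbracket\partial_n\boldsymbol{v_h}\rrbracket\,ds=\boldsymbol{0}$ for every $p_e\in P_{1}(e)$ (and, on $\mathcal{E}_h^b$, $\int_{e}p_e\,\partial_n\boldsymbol{v_h}\,ds=\boldsymbol{0}$). For the tangential-derivative parts one argues that the scalar jump $\llbracket\boldsymbol{v_h}\cdot\boldsymbol{n}_e\rrbracket$ (and likewise $\llbracket\boldsymbol{v_h}\cdot\boldsymbol{t}_e\rrbracket$) is a cubic on $e$ that vanishes at both endpoints of $e$, because $\boldsymbol{v_h}$ is continuous at the vertices of $\mathcal{T}_h$, and has zero mean, because $\int_{e}\llbracket\boldsymbol{v_h}\rrbracket\,ds=\boldsymbol{0}$; these three conditions pin it into a one-dimensional subspace of $P_{3}(e)$, and a short computation shows that the $\partial_{\boldsymbol{t}_e}$-derivative of that subspace lies in $\{q\in P_{2}(e):q\perp P_{1}(e)\}$. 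On boundary edges the conditions $\boldsymbol{v_h}(a)=\boldsymbol{0}$ at boundary vertices and $\int_{e}\boldsymbol{v_h}\,ds=\boldsymbol{0}$ give the same conclusion for the trace of the tangential derivative. Consequently each edge integral is unchanged if $\boldsymbol{\Phi}$ is replaced by $\boldsymbol{\Phi}-\Pi^{1}_{e}\boldsymbol{\Phi}$, where $\Pi^{1}_{e}$ denotes the $L^{2}(e)$-projection onto $P_{1}(e)$.

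The conclusion is a scaling argument. The trace theorem together with the Bramble--Hilbert lemma on the reference element gives $\|\boldsymbol{\Phi}-\Pi^{1}_{e}\boldsymbol{\Phi}\|_{0,e}\lesssim h_{T}^{\,k-5/2}\,|\nabla\cdot\sigma(\boldsymbol{w})|_{k-2,\omega_e}\lesssim h_{T}^{\,k-5/2}\,|\boldsymbol{w}|_{k,\omega_e}$ for $k=3,4$, where $\omega_e$ is the union of the (at most two) elements meeting $e$. On the other hand $\boldsymbol{r}_h$ is a polynomial of degree at most $2$ on $e$ orthogonal to $P_{1}(e)$, so $\|\boldsymbol{r}_h\|_{0,e}=\|\boldsymbol{r}_h-\Pi^{1}_{e}\boldsymbol{r}_h\|_{0,e}\lesssim h_{T}^{2}\,|\boldsymbol{r}_h|_{2,e}$, and an inverse estimate for the polynomial $\boldsymbol{v_h}$ bounds $|\boldsymbol{r}_h|_{2,e}$ (a rescaled third-order derivative of $\boldsymbol{v_h}$) by $h_{T}^{-3/2}\,|\boldsymbol{v_h}|_{2,\omega_e}$, whence $\|\boldsymbol{r}_h\|_{0,e}\lesssim h_{T}^{1/2}\,|\boldsymbol{v_h}|_{2,\omega_e}$ — the gain from orthogonality to $P_{1}(e)$ exactly compensating the loss in the inverse inequality. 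A Cauchy--Schwarz inequality on each $e$, followed by a discrete Cauchy--Schwarz over $\mathcal{E}_h$ and the finite-overlap property of the patches $\omega_e$, then yields $C\,h^{k-2}\,|\boldsymbol{w}|_{k,\Omega}\,|\boldsymbol{v_h}|_{2,h}$. I expect the real obstacle to be the second step: because of the tensorial structure, $\varepsilon(\boldsymbol{v_h})\boldsymbol{n}_e$ and $\nabla\cdot\boldsymbol{v_h}$ genuinely mix normal and tangential derivatives on each edge, and although the normal pieces fall directly under a moment condition of $B_{h0}^{3}$, the tangential pieces need the slightly delicate observation that vertex continuity together with the single zero-mean condition already forces the required $P_{1}(e)$-orthogonality; once all the jumps are in that form, the remaining bounds are the routine scaling estimates of Lemmas 15--16 of \cite{zhang2018}.
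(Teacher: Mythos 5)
Your argument is correct and is essentially the proof the paper intends: the paper simply defers to Lemmas 15--16 of \cite{zhang2018}, and your edge-by-edge treatment --- normal-derivative jumps controlled by the $P_1(e)$ moment conditions of $B_{h0}^3$, tangential-derivative jumps made orthogonal to $P_1(e)$ by integrating by parts along $e$ and using vertex continuity together with $\int_e\llbracket \boldsymbol{v_h}\rrbracket\,ds=\boldsymbol{0}$, followed by the standard projection, trace and inverse scaling estimates --- is exactly that machinery carried over to the tensorial traces $\varepsilon(\boldsymbol{v_h})\boldsymbol{n}$ and $(\nabla\cdot\boldsymbol{v_h})\boldsymbol{n}$. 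The resulting bound $Ch^{k-2}|\boldsymbol{w}|_{k,\Omega}|\boldsymbol{v_h}|_{2,h}$ is the claimed estimate, so I see no gap.
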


\begin{theorem}\label{theorem_Bh03}
Let $\boldsymbol{w}\in (H^{k}(\Omega))^2\cap V(k=3,4)$ be the solution of \eqref{ElasticBoundaryProblem}, and $\boldsymbol{w_h}$ be the solution of \eqref{Non-constantBoundaryValueProblemdis}, respectively. Then
$$
\|\nabla\cdot\sigma(\boldsymbol{w}-\boldsymbol{w_h})\|_{0,\Omega}\leqslant Ch^{k-2}|\boldsymbol{w}|_{k,\Omega}.
$$
\end{theorem}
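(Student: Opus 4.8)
The plan is to follow the classical Strang-type (second lemma) framework for nonconforming finite elements, with the error measured in the mesh-dependent norm $\|\cdot\|_h$, which by the earlier lemmas coincides (up to constants) with the broken $H^2$-seminorm $|\cdot|_{2,h}$. First I would establish the abstract error bound
\begin{equation}\nonumber
\|\nabla\cdot\sigma(\boldsymbol{w}-\boldsymbol{w_h})\|_{0,\Omega}\leq C\inf_{\boldsymbol{v_h}\in(B_{h0}^3)^2}\|\nabla\cdot\sigma(\boldsymbol{w}-\boldsymbol{v_h})\|_{0,\Omega}+C\sup_{\boldsymbol{\varphi_h}\in(B_{h0}^3)^2}\frac{|A_{\beta,h}(\boldsymbol{w},\boldsymbol{\varphi_h})-(\boldsymbol{f},\boldsymbol{\varphi_h})|}{\|\boldsymbol{\varphi_h}\|_h},
\end{equation}
which comes in the usual way from coercivity and boundedness of $A_{\beta,h}$ on $(B_{h0}^3)^2$ (already proved) plus the triangle inequality. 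The approximation term is controlled immediately by Lemma \ref{AuxiliaryOperator} together with Theorem \ref{CCoerciveness} (applied elementwise, or rather its proof, since $\|\nabla\cdot\sigma(\cdot)\|_0\cong|\cdot|_{2,h}$ piecewise), giving the bound $Ch^{k-2}|\boldsymbol{w}|_{k,\Omega}$.

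The real work is the consistency term. Here I would take $\boldsymbol{\varphi_h}\in(B_{h0}^3)^2$, integrate $A_{\beta,h}(\boldsymbol{w},\boldsymbol{\varphi_h})=\sum_T(\beta\nabla\cdot\sigma(\boldsymbol{w}),\nabla\cdot\sigma(\boldsymbol{\varphi_h}))_T$ by parts on each element $T$, and use the strong equation $(\nabla\cdot\sigma)(\beta\nabla\cdot\sigma(\boldsymbol{w}))=\boldsymbol{f}$ to cancel the volume term against $(\boldsymbol{f},\boldsymbol{\varphi_h})$. What remains is a sum of boundary integrals over $\partial T$. The integration by parts for the elasticity operator $\nabla\cdot\sigma(\boldsymbol{\varphi_h})$ naturally splits (via the identity $\sigma=2\mu\varepsilon+\lambda(\nabla\cdot)\,\mathrm{I}$ used in the proof of Theorem \ref{CCoerciveness}) into exactly the two families of edge terms handled by Lemma \ref{boundary1} and Lemma \ref{boundary2}: one involving $\sigma(\nabla\cdot\sigma(\boldsymbol{w}))\boldsymbol{n}$ paired with $\boldsymbol{\varphi_h}$, and ones involving $\nabla\cdot\sigma(\boldsymbol{w})$ paired with $\varepsilon(\boldsymbol{\varphi_h})\boldsymbol{n}$ and with $(\nabla\cdot\boldsymbol{\varphi_h})\boldsymbol{n}$. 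I would need to be careful that the factor $\beta$ is smooth and bounded so it can be absorbed (its derivatives contribute lower-order volume terms that are also easily estimated, or one keeps $\beta$ inside and notes Lemmas \ref{boundary1}–\ref{boundary2} go through with $\beta$ present up to the factor $\|\beta\|_{W^{1,\infty}}$). Summing these estimates gives the consistency term bounded by $Ch^{k-2}|\boldsymbol{w}|_{k,\Omega}|\boldsymbol{\varphi_h}|_{2,h}\leq Ch^{k-2}|\boldsymbol{w}|_{k,\Omega}\|\boldsymbol{\varphi_h}\|_h$, which is precisely what is needed.

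Combining the two contributions and dividing through by $\|\boldsymbol{\varphi_h}\|_h$ in the supremum yields the claimed estimate $\|\nabla\cdot\sigma(\boldsymbol{w}-\boldsymbol{w_h})\|_{0,\Omega}\leq Ch^{k-2}|\boldsymbol{w}|_{k,\Omega}$. The main obstacle I anticipate is bookkeeping rather than conceptual: correctly carrying out the elementwise integration by parts for the full tensorial operator $(\nabla\cdot\sigma)(\beta\nabla\cdot\sigma(\cdot))$ so that the leftover edge terms match exactly the hypotheses of Lemmas \ref{boundary1} and \ref{boundary2}, and verifying that the nonconformity is only "weak" in the right sense — i.e., that the jumps of $\boldsymbol{\varphi_h}$, $\partial_n\boldsymbol{\varphi_h}$ controlled by the $B_{h0}^3$ moment conditions (integral of the jump against $P_1(e)$ vanishes) are exactly what is required to make the edge terms $O(h^{k-2})$. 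Since $B_{h0}^3$ is not an affine-equivalent element, I would invoke the machinery of \cite{zhang2018} (as the paper already does for Lemmas \ref{boundary1}–\ref{boundary2}) rather than a Bramble–Hilbert argument on a reference element. One should also double-check that the decomposition of the energy into the $\nabla\nabla\cdot$ and $\mathrm{curl}\,\mathrm{rot}$ parts used in Theorem \ref{CCoerciveness} survives at the discrete level via Lemma \ref{lemma2.2}, so that $\|\cdot\|_h$ genuinely dominates $\mu^2|\cdot|_{2,h}^2$ and coercivity is uniform in $h$.
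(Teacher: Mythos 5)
Your proposal is correct and follows essentially the same route as the paper: Strang's lemma, the approximation bound via Lemma \ref{AuxiliaryOperator}, and the consistency term handled by elementwise Green's formula whose boundary remainders are exactly the terms estimated in Lemmas \ref{boundary1} and \ref{boundary2}. Your extra remarks on absorbing the smooth coefficient $\beta$ and on the uniform coercivity via Lemma \ref{lemma2.2} are sensible refinements of the same argument, not a different method.
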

\begin{proof}
By Strang lemma,
\begin{equation}
\|\nabla\cdot\sigma(\boldsymbol{w}-\boldsymbol{w_h})\|_{0,\Omega}\leqslant C\left(\inf\limits_{\boldsymbol{v_h}\in(B_{h0}^3)^2}\|\nabla\cdot\sigma(\boldsymbol{w}-\boldsymbol{w_h})\|_{0,\Omega}+\sup\limits_{\boldsymbol{v_h}\neq\boldsymbol{0}\in(B_{h0}^3)^2}\frac{\left(\nabla\cdot\sigma(\boldsymbol{w}),\nabla\cdot\sigma(\boldsymbol{v_h})\right)-(\boldsymbol{f},\boldsymbol{v_h})}{\|\nabla\cdot\sigma(\boldsymbol{v_h})\|_{0,D}} \right).
\end{equation}
The approximation error estimate follows by Lemma \ref{AuxiliaryOperator}. Next we consider the consistency error.
For $\forall \boldsymbol{v_h}\in(B_{h0}^3)^2$, by (\ref{ElasticBoundaryProblem}),
\begin{eqnarray}\label{OriginProblem}
(\boldsymbol{f},\boldsymbol{v_h})&=&(\nabla\cdot\sigma(\nabla\cdot\sigma(\boldsymbol{w})),\boldsymbol{v_h})=\Sigma_{T\in\mathcal{T}_h}\int_T\left(\nabla\cdot\sigma(\nabla\cdot\sigma(\boldsymbol{w}))\right)\cdot\boldsymbol{v_h}dxdy. \end{eqnarray}
We focus on an element $T\in\mathcal{T}_h$ and use the Green formula, then
\begin{eqnarray}\label{GreenFormula}\nonumber
&&\int_T\left(\nabla\cdot\sigma(\nabla\cdot\sigma(\boldsymbol{w}))\right)\cdot\boldsymbol{v_h}dxdy=\int_{\partial T}\left(\sigma(\nabla\cdot\sigma(\boldsymbol{w}))\boldsymbol{n}\right)\cdot\boldsymbol{v_h}ds-\int_T\sigma(\nabla\cdot\sigma(\boldsymbol{w})):\nabla\boldsymbol{v_h}dxdy\\ \nonumber
&=&\int_{\partial T}\left(\sigma(\nabla\cdot\sigma(\boldsymbol{w}))\boldsymbol{n}\right)\cdot\boldsymbol{v_h}ds-2\mu\int_{\partial T}(\nabla\cdot\sigma(\boldsymbol{w}))\cdot\left(\varepsilon(\boldsymbol{v_h})\cdot\boldsymbol{n}\right)ds-\lambda\int_{\partial T}\left(\nabla\cdot\sigma(\boldsymbol{w})\right)\left(\nabla\cdot\boldsymbol{v_h}\right)\cdot\boldsymbol{n}ds \\ 
&+&\int_{T}\left(\nabla\cdot\sigma(\boldsymbol{w})\right)\cdot\left(\nabla\cdot\sigma(\boldsymbol{v_h})\right)dxdy. 
\end{eqnarray}
Hence, the combination of (\ref{OriginProblem}), (\ref{GreenFormula}), Lemma \ref{boundary1} and Lemma \ref{boundary2} leads to the desired result.
\end{proof}

The finite element space $B^3_{h0}$ leads immediately to a high-accuracy scheme for the eigenvalue
problem of bi-elastic problem (\ref{ElasticEigenvalueProblem}).

\section{Error estimate of the elasticity transmission eigenvalue problem}\label{ETEP}
Here we apply the $B_{h0}^3$ scheme to the elastic transmission eigenvalue problem \eqref{WeakFormula}. To analyse the error estimate, we first define the sesquilinear forms on $V\times V$
\begin{eqnarray}
A_\tau(\boldsymbol{w},\boldsymbol{\varphi})&=&\left((\rho_1-\rho_0)^{-1}(\nabla\cdot\sigma+\tau\rho_0)\boldsymbol{w},(\nabla\cdot\sigma+\tau\rho_0)\boldsymbol{\varphi}\right)+\tau^2(\rho_0\boldsymbol{w},\boldsymbol{\varphi})\nonumber \\
\tilde{A}_\tau(\boldsymbol{w},\boldsymbol{\varphi})&=&\left((\rho_0-\rho_1)^{-1}(\nabla\cdot\sigma+\tau\rho_1)\boldsymbol{w},(\nabla\cdot\sigma+\tau\rho_1)\boldsymbol{\varphi}\right)+\tau^2(\rho_1\boldsymbol{w},\boldsymbol{\varphi})\nonumber\\ B(\boldsymbol{w},\boldsymbol{\varphi})&=&\left(\sigma(\boldsymbol{w}),\nabla\boldsymbol{\varphi}\right).\nonumber
\end{eqnarray}
It's easy to verify that $A_\tau(\boldsymbol{w},\boldsymbol{\varphi}),\ \tilde{A}_\tau(\boldsymbol{w},\boldsymbol{\varphi})$ and $B(\boldsymbol{w},\boldsymbol{\varphi})$ are symmetric.
Using the Green formula, the variational problem for (\ref{WeakFormula}) can be written as to find $\tau\in R$ and $\boldsymbol{w}\in V$ such that
\begin{eqnarray}\label{Weak_Formula}
A_\tau(\boldsymbol{w},\boldsymbol{\varphi})=\tau B(\boldsymbol{w},\boldsymbol{\varphi}),\ \ \ \ \forall\boldsymbol{\varphi}\in V.
\end{eqnarray}
for $Q\leq1\leq q_*$
and
\begin{eqnarray}
\tilde{A}_\tau(\boldsymbol{w},\boldsymbol{\varphi})=\tau B(\boldsymbol{w},\boldsymbol{\varphi}),\ \ \ \ \forall\boldsymbol{\varphi}\in V.
\end{eqnarray}
for $Q_*\leq1\leq q$.

\begin{lemma}\cite{BellisCakoniGuzina,JiLiSun}
Let $\rho_0(x),\ \rho_1(x)$ be smooth enough and assume that $Q\leq1\leq q_*$. Then, $A_\tau$ is a coercive sesquilinear form on $V\times V$, i.e., there exists a constant $\gamma>0$ such that
\begin{equation}
A_\tau(\boldsymbol{w},\boldsymbol{w})\geqslant \gamma\parallel\boldsymbol{w}\parallel^2,\\\\\\~~\forall \boldsymbol{w}\in V.
\end{equation}
Under the assumption $Q_*\leq1\leq q$, we can also obtain that
$\tilde{A}_\tau$ is a coercive sesquilinear form on $V\times V$, i.e., there exists a constant $\gamma>0$ such that
\begin{equation}
\tilde{A}_\tau(\boldsymbol{w},\boldsymbol{w})\geqslant \gamma\parallel\boldsymbol{w}\parallel^2,\\\\\\~~\forall \boldsymbol{w}\in V.
\end{equation}
Besides, the bilinear form $\mathcal{B}(\cdot,\cdot)$ is symmetric and nonnegative on $V\times V$.
\end{lemma}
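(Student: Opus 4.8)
The plan is to treat the three claims separately; the two coercivity statements rest on one and the same computation, and the assertion about $\mathcal{B}$ is elementary.

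For the coercivity of $A_\tau$ under $Q\le 1\le q_*$ I would start from the fact that, with $\boldsymbol\varphi=\boldsymbol w$, both terms of $A_\tau(\boldsymbol w,\boldsymbol w)$ are nonnegative: since $\rho_0,\rho_1$ are smooth on the compact set $\overline\Omega$ and ``non-intersecting'' with $\rho_0\le Q\le 1\le q_*\le\rho_1$, the function $\rho_1-\rho_0$ is continuous and bounded below by a positive constant, so $(\rho_1-\rho_0)^{-1}\ge c_0:=\|\rho_1-\rho_0\|_{L^\infty(\Omega)}^{-1}>0$, which gives
\[
A_\tau(\boldsymbol w,\boldsymbol w)\ \ge\ c_0\,\|\nabla\cdot\sigma(\boldsymbol w)+\tau\rho_0\boldsymbol w\|_0^2+\tau^2 q\,\|\boldsymbol w\|_0^2 .
\]
The key step is then to recover $\|\nabla\cdot\sigma(\boldsymbol w)\|_0$ from the first term: for $\epsilon>0$, the elementary inequality $\|a-b\|_0^2\le(1+\epsilon)\|a\|_0^2+(1+\epsilon^{-1})\|b\|_0^2$ with $a=\nabla\cdot\sigma(\boldsymbol w)+\tau\rho_0\boldsymbol w$ and $b=\tau\rho_0\boldsymbol w$ (so $\|b\|_0\le\tau Q\|\boldsymbol w\|_0$) yields $\|\nabla\cdot\sigma(\boldsymbol w)+\tau\rho_0\boldsymbol w\|_0^2\ge\tfrac{1}{1+\epsilon}\|\nabla\cdot\sigma(\boldsymbol w)\|_0^2-\tfrac{\tau^2Q^2}{\epsilon}\|\boldsymbol w\|_0^2$. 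Plugging this in and choosing $\epsilon=c_0Q^2/q$ makes the $\tau^2\|\boldsymbol w\|_0^2$ contributions cancel exactly, leaving $A_\tau(\boldsymbol w,\boldsymbol w)\ge\tfrac{c_0}{1+\epsilon}\|\nabla\cdot\sigma(\boldsymbol w)\|_0^2$, and Theorem~\ref{CCoerciveness} ($\|\nabla\cdot\sigma(\boldsymbol w)\|_0\cong\|\boldsymbol w\|_2$ on $V$) converts this into $A_\tau(\boldsymbol w,\boldsymbol w)\ge\gamma\|\boldsymbol w\|^2$; with this argument $\gamma$ even comes out independent of $\tau$. The treatment of $\tilde A_\tau$ under $Q_*\le1\le q$ is the mirror image: one uses $\rho_0-\rho_1\ge q-Q_*>0$ so $(\rho_0-\rho_1)^{-1}$ is bounded below, with the bounds $q_*\le\rho_1\le Q_*$ playing the role of $q\le\rho_0\le Q$.

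For $\mathcal{B}(\boldsymbol w,\boldsymbol\varphi)=(\sigma(\boldsymbol w),\nabla\boldsymbol\varphi)$ I would simply note that $\sigma(\boldsymbol w)$ is symmetric, so only the symmetric part of $\nabla\boldsymbol\varphi$ survives the inner product, giving $\mathcal{B}(\boldsymbol w,\boldsymbol\varphi)=(\sigma(\boldsymbol w),\varepsilon(\boldsymbol\varphi))=2\mu(\varepsilon(\boldsymbol w),\varepsilon(\boldsymbol\varphi))+\lambda(\nabla\cdot\boldsymbol w,\nabla\cdot\boldsymbol\varphi)$; this is obviously symmetric, and $\mathcal{B}(\boldsymbol w,\boldsymbol w)=2\mu\|\varepsilon(\boldsymbol w)\|_0^2+\lambda\|\nabla\cdot\boldsymbol w\|_0^2\ge0$ since $\mu,\lambda>0$.

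The only genuinely delicate point is controlling the $\tau$-dependence in the coercivity estimate: the cross term $2\tau(\rho_0\nabla\cdot\sigma(\boldsymbol w),\boldsymbol w)$ concealed inside $\|\nabla\cdot\sigma(\boldsymbol w)+\tau\rho_0\boldsymbol w\|_0^2$ must be absorbed, and this succeeds precisely because the zeroth-order term $\tau^2(\rho_0\boldsymbol w,\boldsymbol w)$ built into $A_\tau$ carries the matching $\tau^2$ weight — the choice $\epsilon=c_0Q^2/q$ is the one that balances the two. Everything else is a Cauchy--Schwarz/Young estimate in $L^2$ together with a direct appeal to Theorem~\ref{CCoerciveness} (no integration by parts enters the coercivity estimate itself, so the boundary conditions in $V$ are used only through that theorem).
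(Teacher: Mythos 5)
Your proof is correct, and the paper itself offers no proof of this lemma — it is quoted from \cite{BellisCakoniGuzina,JiLiSun} — so there is nothing internal to compare against; your argument (peel off the cross term with Young's inequality, choose $\epsilon$ so the $\tau^2\|\boldsymbol{w}\|_0^2$ terms cancel against the built-in $\tau^2(\rho_0\boldsymbol{w},\boldsymbol{w})$, then invoke Theorem \ref{CCoerciveness}, plus the elementary computation for $\mathcal{B}$) is essentially the standard coercivity proof used in those references. One cosmetic point: the lower bound $c_0$ for $(\rho_1-\rho_0)^{-1}$ comes from the \emph{upper} bound $\rho_1-\rho_0\leq Q_*-q$ together with positivity of $\rho_1-\rho_0$, not from its lower bound as your wording suggests, but this does not affect the argument.
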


In the following, we take the case $Q\leq1\leq q_*$ for illustration. For the case $Q_*\leq1\leq q$, it follows similarly.
For a fixed point $\tau$, we define the following generalized eigenvalue problem:
Find $(\lambda(\tau),\boldsymbol{w})\in \mathcal{R}\times V$ such that $B(\boldsymbol{w},\boldsymbol{w})=1$ and
\begin{eqnarray}\label{Eigenvalue_A_Tau}
A_{\tau}(\boldsymbol{w},\boldsymbol{v})&=&\lambda(\tau)B(\boldsymbol{w},\boldsymbol{v}),\ \ \ \ \forall \boldsymbol{v}\in V.
\end{eqnarray}
The original eigenvalue problem (\ref{WeakFormula}) can be converted to solve the zero point of the nonlinear function
\begin{equation}\label{continuous_nonlinear_form}
f(\tau)=\lambda(\tau)-\tau.
\end{equation}
Furthermore, from the definitions
of $A_{\tau}(\cdot,\cdot)$, $f(\tau)$ is continuous corresponding to $\tau$ based on the eigenvalue
perturbation theory (c.f. \cite{Babuska,BabuskaOsborn1991}) and it's easy to verify the existence of zero points (c.f.\cite{JiLiSun}).
In this paper, we consider the numerical method for (\ref{continuous_nonlinear_form}).

By $B_{h0}^3$ scheme, the corresponding discretization form is to find $(\tau_h,\boldsymbol{w_h})\in\mathcal{R}\times (B_{h0}^3)^2$ such that $\mathcal{B}(\boldsymbol{w_h},\boldsymbol{w_h})=1$ and
\begin{eqnarray}\label{Discrete_Eigenvalue_Problem_h}
A_{\tau_h,h}(\boldsymbol{w_h},\boldsymbol{\varphi_h}) &=& \tau_h B_h(\boldsymbol{w_h},\boldsymbol{\varphi_h}), \ \ \ \forall \boldsymbol{\varphi_h}\in (B_{h0}^3)^2.
\end{eqnarray}
Similar to (\ref{Eigenvalue_A_Tau}), we define the discretized generalized eigenvalue problem: Find $(\lambda_h(\tau),\boldsymbol{w_h})\in\mathcal{R}\times (B_{h0}^3)^2$
such that $B_h(\boldsymbol{w_h},\boldsymbol{w_h})=1$ and
\begin{eqnarray}\label{Auxiliary_Eigenvalue_Problem_h}
A_{\tau,h}(\boldsymbol{w_h},\boldsymbol{\varphi_h}) &=& \lambda_{h}(\tau)B_h(\boldsymbol{w_h},\boldsymbol{\varphi_h}),
 \ \ \ \forall \boldsymbol{\varphi_h}\in (B_{h0}^3)^2.
\end{eqnarray}
The corresponding discretized nonlinear function is
\begin{equation}\label{discrete_nonlinear_form}
f_h(\tau)=\lambda_h(\tau)-\tau.
\end{equation}
\begin{lemma}\label{ApproximationEvs1}
In terms of (\ref{Eigenvalue_A_Tau}) and (\ref{Auxiliary_Eigenvalue_Problem_h}), let $\boldsymbol{w}\in (H^{4}(\Omega))^2\cap V$, then we can obtain the following approximate result
\begin{equation}
|\lambda(\tau)-\lambda_h(\tau)|< C h^4.
\end{equation}
\end{lemma}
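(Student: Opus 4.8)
The plan is to treat the generalized eigenvalue problem \eqref{Eigenvalue_A_Tau} as a standard self-adjoint eigenvalue problem in the framework of Babu\v{s}ka--Osborn spectral approximation theory (c.f.\ \cite{BabuskaOsborn1991}), and to transfer the $O(h^4)$ source-problem estimate already established for $B_{h0}^3$ (Theorem \ref{theorem_Bh03} together with Lemma \ref{AuxiliaryOperator}) into an $O(h^4)$ estimate for the eigenvalues. Concretely, for the \emph{fixed} parameter $\tau$ one introduces the solution operator $T_\tau : V \to V$ (and its discrete analogue $T_{\tau,h} : (B_{h0}^3)^2 \to (B_{h0}^3)^2$) defined via $A_\tau(T_\tau \boldsymbol f,\boldsymbol\varphi) = B(\boldsymbol f,\boldsymbol\varphi)$. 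Since $A_\tau$ is coercive on $V\times V$ by the cited lemma and $B$ is a lower-order (hence compact relative to the $A_\tau$-inner product) symmetric nonnegative form, $T_\tau$ is compact and self-adjoint with respect to $A_\tau(\cdot,\cdot)$; its nonzero eigenvalues are the reciprocals $1/\lambda(\tau)$. The Babu\v{s}ka--Osborn theory then gives, for a simple eigenvalue, the abstract bound
\[
|\lambda(\tau)-\lambda_h(\tau)| \;\le\; C\,\big\|(T_\tau - T_{\tau,h})\,\boldsymbol{w}\big\|_h^2 \;+\; C\,\big(\text{consistency terms from nonconformity}\big),
\]
so the quadratic gain is the mechanism that promotes the $O(h^2)$ energy rate to $O(h^4)$.

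The key steps, in order, would be: (i) write $T_\tau$ and $T_{\tau,h}$ explicitly and verify compactness/self-adjointness, using the coercivity of $A_\tau$ from the cited lemma and the fact that $B(\boldsymbol w,\boldsymbol\varphi)=(\sigma(\boldsymbol w),\nabla\boldsymbol\varphi)$ involves only first derivatives, so $B$ is bounded by $\|\boldsymbol w\|_1\|\boldsymbol\varphi\|_1$ and the embedding $V\hookrightarrow (H^1)^2$ is compact; (ii) establish the operator norm estimate $\|(T_\tau - T_{\tau,h})\boldsymbol f\|_h \le C h^{2}\|\boldsymbol f\|_0$ for the source problem associated with $A_\tau$ — this is essentially a rerun of the proof of Theorem \ref{theorem_Bh03} with the bilinear form $A_{\beta,h}$ replaced by $A_{\tau,h}$ and the right-hand side $(\boldsymbol f,\boldsymbol\varphi_h)$ replaced by $B_h(\boldsymbol f,\boldsymbol\varphi_h)$, invoking Lemma \ref{boundary1} and Lemma \ref{boundary2} for the consistency error and Lemma \ref{AuxiliaryOperator} for the approximation error, using the assumed $H^4$ regularity of the eigenfunction $\boldsymbol w$ to get the full power $h^{4-2}=h^2$; (iii) feed the estimate from (ii) into the Babu\v{s}ka--Osborn eigenvalue formula to obtain $|\lambda(\tau)-\lambda_h(\tau)|\le C\|(T_\tau-T_{\tau,h})\boldsymbol w\|_h^2 \le C h^4$, taking care of the extra nonconforming consistency contribution by the same duality/Green's-formula argument used in Theorem \ref{theorem_Bh03}, which is also $O(h^{k-2})=O(h^2)$ per factor and hence $O(h^4)$ after squaring.

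The main obstacle is the nonconformity: because $(B_{h0}^3)^2 \not\subset V$, the operator $T_{\tau,h}$ is not a Galerkin projection of $T_\tau$, so the clean "$\|(T-T_h)u\|\le C\,\|(T-T_h)u\|\cdot\|(T^*-T_h^*)v\|$"-type duality argument must be replaced by a version that accounts for the consistency error terms $\sum_T\int_{\partial T}(\sigma(\nabla\cdot\sigma(\boldsymbol w))\boldsymbol n)\cdot\boldsymbol v_h\,ds$ etc.\ that appear in \eqref{GreenFormula}. One must check that these boundary terms, when estimated by Lemma \ref{boundary1} and Lemma \ref{boundary2} and then squared (or multiplied by a dual error of the same order), still give $O(h^4)$ and do not degrade to $O(h^2)$ or $O(h^3)$; this requires that both the primal eigenfunction and the dual problem's solution enjoy $H^4$ regularity, which is where the convexity assumption on $\Omega$ and the smoothness of $\rho_0,\rho_1$ enter. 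The remaining steps — compactness of $T_\tau$, coercivity, boundedness of $A_{\tau,h}$ uniformly in $h$ (so that $T_{\tau,h}$ is well defined, via the norm equivalence of Lemma 2.1 and Lemma \ref{lemma2.2}) — are routine adaptations of the bi-elastic analysis in Section 3.
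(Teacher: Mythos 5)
Your proposal is correct and follows essentially the same route the paper intends: the paper's own proof is only the one-line remark that the lemma follows ``similarly'' from the proof of Theorem \ref{theorem_Bh03}, i.e.\ the source-problem consistency/approximation analysis (Lemmas \ref{AuxiliaryOperator}, \ref{boundary1}, \ref{boundary2}) combined with the classical Babu\v{s}ka--Osborn nonconforming eigenvalue machinery cited later via \cite{BabuskaOsborn1991}. You simply spell out the details the paper omits --- the solution operators $T_\tau$, $T_{\tau,h}$, the squared energy-error term, and the need to pair each $O(h^2)$ nonconforming boundary term with another $O(h^2)$ factor (e.g.\ by applying the boundary lemmas to $\boldsymbol{w}-\boldsymbol{w_h}$ rather than to $\boldsymbol{w_h}$ itself) so that the consistency contribution is also $O(h^4)$ --- which is exactly the intended argument.
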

\begin{proof}
Based on the proof of theorem \ref{theorem_Bh03}, we can prove this lemma similarly.
\end{proof}

The following lemma states that the root of (\ref{discrete_nonlinear_form}) approximates the root of (\ref{continuous_nonlinear_form}) well if the mesh size is small enough.

\begin{lemma}(\cite{JiLiSun})\label{ApproximationEvs2}
Let $f(\tau)$ and $f_h(\tau)$ be two continuous functions. For small enough $\epsilon>0$, there exists some $\eta>0$ such that $f'(\tau)\leqslant-\eta<0$ and $|f(\tau)-f_h(\tau)|<\varepsilon$ on $[x_1-\frac{\epsilon}{\delta},x_2+\frac{\epsilon}{\delta}]$, for some $0<x_1<x_2$,
$\delta>0$ is constant. If there exists $\widetilde{\tau}\in[x_1,x_2]$ such that $f_h(\widetilde{\tau})=0$, then there exists a $\tau^\ast$ such that $f(\tau^\ast)=0$ and holds the following approximate formula
\begin{equation}
|\widetilde{\tau}-\tau^\ast|<\frac{\epsilon}{\delta}.
\end{equation}
\end{lemma}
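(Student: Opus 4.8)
The plan is to prove this by elementary one-dimensional analysis, playing the uniform closeness of $f_h$ to $f$ against the strict monotonicity of $f$ and then invoking the intermediate value theorem. First I would note that since $f_h(\widetilde{\tau})=0$ and $|f(\widetilde{\tau})-f_h(\widetilde{\tau})|<\varepsilon$, we get immediately
\[
|f(\widetilde{\tau})|=|f(\widetilde{\tau})-f_h(\widetilde{\tau})|<\varepsilon,
\]
so $\widetilde{\tau}$ is an approximate zero of $f$ with residual smaller than $\varepsilon$. If $f(\widetilde{\tau})=0$ we simply take $\tau^\ast=\widetilde{\tau}$ and are done, so from now on assume $f(\widetilde{\tau})\neq 0$; by symmetry it suffices to treat $f(\widetilde{\tau})>0$, the case $f(\widetilde{\tau})<0$ being identical with all signs reversed.

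Next I would turn this residual bound into a genuine bracketing interval for a true root. Set $\tau_+=\widetilde{\tau}+\epsilon/\delta$. Since $\widetilde{\tau}\in[x_1,x_2]$, the whole segment $[\widetilde{\tau},\tau_+]$ lies inside $[x_1-\epsilon/\delta,\,x_2+\epsilon/\delta]$, where by hypothesis $f'\leq-\eta<0$; the mean value theorem on $[\widetilde{\tau},\tau_+]$ then yields, for some $\xi$ in that segment,
\[
f(\tau_+)=f(\widetilde{\tau})+f'(\xi)(\tau_+-\widetilde{\tau})\leq f(\widetilde{\tau})-\eta\,\frac{\epsilon}{\delta}<\varepsilon-\eta\,\frac{\epsilon}{\delta}\leq 0,
\]
the final inequality holding once $\delta$ is chosen (as in \cite{JiLiSun}) so that $\eta\epsilon/\delta\geq\varepsilon$, e.g.\ $\delta\leq\eta$ together with $\varepsilon\leq\epsilon$. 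Thus $f\geq 0$ at $\widetilde{\tau}$ and $f\leq 0$ at $\tau_+$, so by continuity of $f$ and the intermediate value theorem there is $\tau^\ast\in[\widetilde{\tau},\tau_+]$ with $f(\tau^\ast)=0$, whence $|\widetilde{\tau}-\tau^\ast|\leq\epsilon/\delta$. To sharpen this to the strict inequality in the statement, I would exploit the strict bound $|f(\widetilde{\tau})|<\varepsilon$: putting $r=f(\widetilde{\tau})/\eta<\varepsilon/\eta\leq\epsilon/\delta$, the same mean value estimate on $[\widetilde{\tau},\widetilde{\tau}+r']$ gives $f(\widetilde{\tau}+r')<0$ for any $r'$ with $r<r'<\epsilon/\delta$, so the sign change, hence the root $\tau^\ast$, can be located inside $(\widetilde{\tau},\widetilde{\tau}+r')$ and $|\widetilde{\tau}-\tau^\ast|<r'<\epsilon/\delta$.

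The main obstacle, such as it is, is not analytic but bookkeeping: one must track the four parameters $\epsilon,\varepsilon,\delta,\eta$ carefully so that the gain $\eta\epsilon/\delta$ coming from strict monotonicity dominates the residual $\varepsilon$ coming from the discretization. This is exactly what the phrase ``for small enough $\epsilon>0$'' and the latitude in choosing $\delta$ and $\eta$ in the hypotheses are meant to provide, and it is the only place where the argument must be made faithful to the precise formulation of \cite{JiLiSun}; everything else is a direct application of the mean value and intermediate value theorems.
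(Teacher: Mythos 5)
Your argument is correct: the paper itself gives no proof of this lemma (it simply cites \cite{JiLiSun}), and your mean-value-plus-intermediate-value argument is exactly the standard proof used there, including the right way to read the loosely stated parameters (taking $\varepsilon$ and $\epsilon$ as the same tolerance and $\delta$ as the slope bound $\eta$, so that $\eta\,\epsilon/\delta\geq\varepsilon$). Nothing further is needed.
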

Further, combining Lemma \ref{ApproximationEvs1} with Lemma \ref{ApproximationEvs2}, we can obtain
\begin{equation}
|\widetilde{\tau}-\tau^\ast|<C h^4,
\end{equation}
which implies that the convergence rate of elastic transmission eigenvalues using $B_{h0}^3$ is 4.

Assume that $\tau_h$ is the approximation of the exact eigenvalue $\tau$ satisfying (\ref{Weak_Formula}). Before giving the convergence analysis of the eigenfunction approximation, similar to the Helmholtz transmission eigenvalue in \cite{JiXiXie2017}, we introduce an auxiliary eigenvalue problem:
Find $(\tilde{\tau},\tilde{\boldsymbol{w}})\in\mathcal{R}\times V$
such that $B(\tilde{\boldsymbol{w}},\tilde{\boldsymbol{w}})=1$ and
\begin{eqnarray}\label{Auxiliary_Eigenvalue_Problem}
A_{\tau_h}(\tilde{\boldsymbol{w}},\boldsymbol{\varphi})&=&\tilde{\tau}B(\tilde{\boldsymbol{w}},\boldsymbol{\varphi}),\ \ \ \ \forall \boldsymbol{\varphi}\in V.
\end{eqnarray}
For (\ref{Weak_Formula}) and (\ref{Auxiliary_Eigenvalue_Problem}), using the standard theory of operator perturbation  \cite{JiXiXie2017, XiJiZhang2020}, we can obtain the following approximate results.
\begin{lemma}\label{Auxiliary2}
Assume that $\tau_h$ and $\tau$ satisfy $|\tau-\tau_h|<C h^4$. Let $(\tau,\boldsymbol{w}), (\tilde{\tau},\tilde{\boldsymbol{w}})$ be the exact solution of (\ref{Weak_Formula}) and (\ref{Auxiliary_Eigenvalue_Problem}), respectively. Let $\boldsymbol{w}\in (H^4(\Omega))^2\cap V$, then, we can obtain the following estimations

\begin{eqnarray}
\|\nabla\cdot\sigma(\boldsymbol{w}-\tilde{\boldsymbol{w}})\|_{0,\Omega}&\lesssim&h^3, \label{0} \\
|\tau-\tilde{\tau}|&\lesssim&h^4.
\end{eqnarray}

\end{lemma}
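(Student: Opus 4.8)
The plan is to treat \eqref{Auxiliary_Eigenvalue_Problem} as a small perturbation of \eqref{Weak_Formula}, the perturbation being entirely encoded in the parameter shift $\tau \mapsto \tau_h$ inside the sesquilinear form $A_\bullet$. First I would record that the family $A_s(\cdot,\cdot)$ depends smoothly (indeed polynomially, of degree two) on the parameter $s$, since from its definition $A_s(\boldsymbol w,\boldsymbol\varphi)$ is a quadratic polynomial in $s$ with coefficients that are fixed bounded bilinear forms on $V\times V$ (namely $\left((\rho_1-\rho_0)^{-1}\nabla\cdot\sigma\boldsymbol w,\nabla\cdot\sigma\boldsymbol\varphi\right)$, the cross terms with $\rho_0$, and $(\rho_0\boldsymbol w,\boldsymbol\varphi)$). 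Hence
\[
|A_{\tau_h}(\boldsymbol w,\boldsymbol\varphi)-A_{\tau}(\boldsymbol w,\boldsymbol\varphi)|\lesssim |\tau-\tau_h|\,\|\boldsymbol w\|_{2}\,\|\boldsymbol\varphi\|_{2}\lesssim h^4\,\|\boldsymbol w\|_{2}\,\|\boldsymbol\varphi\|_{2},
\]
using the hypothesis $|\tau-\tau_h|<Ch^4$. Since $A_{\tau_h}$ is still coercive on $V\times V$ (uniformly for $\tau_h$ in a neighbourhood of $\tau$, by the coercivity lemma quoted above, whose constant $\gamma$ can be taken uniform on a compact parameter set), and $B(\cdot,\cdot)$ is the same symmetric nonnegative form in both problems, the two generalized eigenvalue problems \eqref{Weak_Formula} and \eqref{Auxiliary_Eigenvalue_Problem} differ only through an $O(h^4)$ perturbation of the bilinear form defining the operator.

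Next I would invoke the standard operator-perturbation theory for the associated compact solution operators (the references \cite{JiXiXie2017,XiJiZhang2020} and \cite{BabuskaOsborn1991} are exactly for this). Let $\mathcal{S}_\tau, \mathcal{S}_{\tau_h}:V\to V$ be the solution operators defined by $A_\tau(\mathcal{S}_\tau\boldsymbol g,\boldsymbol\varphi)=B(\boldsymbol g,\boldsymbol\varphi)$ and similarly for $\tau_h$; both are bounded and, composed with the compact embedding coming from $B$, compact and self-adjoint in the appropriate inner product. The bound above gives $\|\mathcal{S}_\tau-\mathcal{S}_{\tau_h}\|_{\mathcal L(V)}\lesssim h^4$. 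Osborn's spectral approximation theory then yields, for the simple eigenvalue under consideration, $|\lambda(\tau_h)-\tilde\tau|\lesssim h^4$ for the eigenvalue (note $\tilde\tau$ is the eigenvalue of the $\tau_h$-problem and $\tau=\lambda(\tau)$ solves $f(\tau)=0$, so after identifying $\lambda(\tau_h)$ with $\tau$ up to the already-established $O(h^4)$ from $f'\le -\eta<0$ one concludes $|\tau-\tilde\tau|\lesssim h^4$); and for the eigenfunction it gives $\|\boldsymbol w-\tilde{\boldsymbol w}\|_{V}\lesssim \|\mathcal{S}_\tau-\mathcal{S}_{\tau_h}\|\lesssim h^4$. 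Since Theorem \ref{CCoerciveness} gives $\|\nabla\cdot\sigma(\boldsymbol w-\tilde{\boldsymbol w})\|_{0,\Omega}\cong\|\boldsymbol w-\tilde{\boldsymbol w}\|_2$, this already delivers \eqref{0} with room to spare — indeed at rate $h^4$ rather than $h^3$; I would then state the bound as $h^3$ simply because that is all that is needed downstream, or sharpen it. One subtlety to handle with care: the eigenfunction estimate from perturbation theory is naturally given \emph{after} normalization, so I would fix the normalization $B(\boldsymbol w,\boldsymbol w)=B(\tilde{\boldsymbol w},\tilde{\boldsymbol w})=1$ and choose the sign/phase of $\tilde{\boldsymbol w}$ to minimize $\|\boldsymbol w-\tilde{\boldsymbol w}\|$, which is legitimate for a simple eigenvalue.

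The main obstacle I anticipate is not the perturbation estimate itself but verifying the hypotheses that make Osborn's theory applicable uniformly: (i) uniform coercivity of $A_{\tau_h}$ for all $h$ small, which needs the coercivity constant $\gamma$ in the quoted lemma to be locally uniform in the parameter — this is true because $A_s$ depends continuously on $s$ and $\tau_h\to\tau$, but it should be stated; (ii) the eigenvalue $\lambda(\tau)$ (equivalently $\tau$) must be simple, or at least have a fixed finite multiplicity, so that the spectral projections converge — I would assume simplicity as is implicit in the statement "$\tilde{\boldsymbol w}$" (a single eigenfunction); and (iii) the elliptic regularity $\boldsymbol w\in (H^4(\Omega))^2$ is hypothesized, so no separate regularity argument is required here — it enters only through $\|\boldsymbol w\|_2\le\|\boldsymbol w\|_4$ in the perturbation bound. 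Assembling these, the two displayed estimates follow, and I would close the proof by remarking that the $h^4$ rate for both $\|\nabla\cdot\sigma(\boldsymbol w-\tilde{\boldsymbol w})\|_0$ and $|\tau-\tilde\tau|$ is in fact available, the weaker $h^3$ in \eqref{0} being retained only for uniformity with the finite element consistency error appearing in the subsequent eigenfunction analysis.
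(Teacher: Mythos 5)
Your proposal is correct and takes essentially the route the paper itself intends: the paper gives no written proof of this lemma, only an appeal to ``the standard theory of operator perturbation'' \cite{JiXiXie2017,XiJiZhang2020}, and your argument --- the $O(|\tau-\tau_h|)=O(h^4)$ bound on $A_\tau-A_{\tau_h}$ as forms on $V\times V$, the induced $O(h^4)$ bound on the difference of the solution operators, Babu\v{s}ka--Osborn/Kato spectral perturbation for the (assumed simple) eigenvalue, and the norm equivalence of Theorem \ref{CCoerciveness} --- is precisely that theory written out, and it even yields the sharper rate $h^4$ in place of $h^3$ in \eqref{0}. One small repair: the intermediate claim $|\lambda(\tau_h)-\tilde\tau|\lesssim h^4$ is vacuous, since $\tilde\tau=\lambda(\tau_h)$ by definition of \eqref{Auxiliary_Eigenvalue_Problem}; what your perturbation bound actually delivers (and all you need) is $|\lambda(\tau)-\lambda(\tau_h)|\lesssim|\tau-\tau_h|\lesssim h^4$, which together with $\lambda(\tau)=\tau$ gives $|\tau-\tilde\tau|\lesssim h^4$ directly, with no appeal to $f'\leq-\eta$.
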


Using Lemma \ref{Auxiliary2} and the triangle inequality, we can obtain the following error estimates.
\begin{theorem}
Let $(\tau,\boldsymbol{w}), (\tau_h,\boldsymbol{w_h})$ be the solution of (\ref{Weak_Formula}) and (\ref{Discrete_Eigenvalue_Problem_h}), respectively. Let $\boldsymbol{w}\in (H^4(\Omega))^2\cap V$ and the domain $\Omega$ be convex. Then, there exist the following results
\begin{eqnarray}
\|\nabla\cdot\sigma(\boldsymbol{w}-\boldsymbol{w_h})\|_{0,\Omega}&\lesssim& h^2, \label{1}  \\
|\tau-\tau_h|&\lesssim& h^4. \label{3}
\end{eqnarray}
\end{theorem}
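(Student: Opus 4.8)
The plan is to bound the errors by inserting the auxiliary eigenpair $(\tilde{\tau},\tilde{\boldsymbol{w}})$ of \eqref{Auxiliary_Eigenvalue_Problem} as an intermediate object and applying the triangle inequality. Note first that \eqref{3}, i.e. $|\tau-\tau_h|\lesssim h^4$, is already at hand: it was derived above by combining Lemma \ref{ApproximationEvs1} with Lemma \ref{ApproximationEvs2} (there $\widetilde{\tau}$ plays the role of $\tau_h$ and $\tau^\ast$ that of $\tau$). In particular the hypothesis of Lemma \ref{Auxiliary2} is met, so \eqref{0} gives $\|\nabla\cdot\sigma(\boldsymbol{w}-\tilde{\boldsymbol{w}})\|_{0,\Omega}\lesssim h^3$ and $|\tau-\tilde{\tau}|\lesssim h^4$.

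For \eqref{1} it then remains to estimate $\|\nabla\cdot\sigma(\tilde{\boldsymbol{w}}-\boldsymbol{w}_h)\|_{0,\Omega}$. The key point is that $(\tilde{\tau},\tilde{\boldsymbol{w}})$ is exactly the continuous eigenpair of the generalized eigenvalue problem \eqref{Eigenvalue_A_Tau} at the frozen parameter $\tau=\tau_h$ (so $\tilde{\tau}=\lambda(\tau_h)$), while $(\tau_h,\boldsymbol{w}_h)$ --- since $f_h(\tau_h)=0$ forces $\lambda_h(\tau_h)=\tau_h$ --- is its $B_{h0}^3$ discrete counterpart \eqref{Auxiliary_Eigenvalue_Problem_h} at the same parameter. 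Hence this difference is controlled by the standard spectral approximation theory of compact self-adjoint operators \cite{BabuskaOsborn1991,Osborn1975MC}, applied to the solution operator of the coercive form $A_{\tau_h}(\cdot,\cdot)$ and its nonconforming discretization $A_{\tau_h,h}(\cdot,\cdot)$; the uniform convergence of these operators together with the consistency control for the $B_{h0}^3$ element is precisely what underlies Theorem \ref{theorem_Bh03} (taking $k=4$, order $h^2$), the lower-order terms in $A_{\tau_h}$ and the form $B$ not affecting the rate. Since $\boldsymbol{w}\in(H^4(\Omega))^2$ and $\Omega$ is convex, elliptic regularity for the fourth-order problem \eqref{Auxiliary_Eigenvalue_Problem} gives $\tilde{\boldsymbol{w}}\in(H^4(\Omega))^2$ with norm bounded uniformly in $h$ (as $\tau_h\to\tau$), so the energy-norm eigenfunction estimate yields $\|\nabla\cdot\sigma(\tilde{\boldsymbol{w}}-\boldsymbol{w}_h)\|_{0,\Omega}\lesssim h^2$. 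The triangle inequality then gives
\[
\|\nabla\cdot\sigma(\boldsymbol{w}-\boldsymbol{w}_h)\|_{0,\Omega}\le\|\nabla\cdot\sigma(\boldsymbol{w}-\tilde{\boldsymbol{w}})\|_{0,\Omega}+\|\nabla\cdot\sigma(\tilde{\boldsymbol{w}}-\boldsymbol{w}_h)\|_{0,\Omega}\lesssim h^3+h^2\lesssim h^2,
\]
which is \eqref{1}. For \eqref{3} one may quote the bound already obtained before the theorem, or re-derive it as $|\tau-\tau_h|\le|\tau-\tilde{\tau}|+|\tilde{\tau}-\tau_h|$, where $|\tau-\tilde{\tau}|\lesssim h^4$ by Lemma \ref{Auxiliary2} and $|\tilde{\tau}-\tau_h|=|\lambda(\tau_h)-\lambda_h(\tau_h)|\lesssim h^4$ by Lemma \ref{ApproximationEvs1} at the frozen parameter $\tau_h$ (the eigenvalue rate being double the $h^2$ eigenfunction rate, as usual for self-adjoint problems).

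The routine part is the triangle-inequality bookkeeping and the use of Lemma \ref{Auxiliary2}. The real work --- and the point needing care --- is the middle estimate $\|\nabla\cdot\sigma(\tilde{\boldsymbol{w}}-\boldsymbol{w}_h)\|_{0,\Omega}\lesssim h^2$: one must introduce the solution operators $T_{\tau_h}:V\to V$ and $T_{\tau_h,h}:(B_{h0}^3)^2\to(B_{h0}^3)^2$, establish their convergence in operator norm within the nonconforming framework --- which is where Theorem \ref{theorem_Bh03} and the consistency Lemmas \ref{boundary1}--\ref{boundary2} are used --- and verify that all constants stay uniform in $h$ even though $\tau_h$ itself varies with $h$; finally one uses that the transmission eigenvalue in question is isolated (e.g. $\lambda(\cdot)$ simple near $\tau$), so that the associated spectral projections converge and the eigenfunction estimate is legitimate, a cluster being treated via its total spectral projection in the usual way.
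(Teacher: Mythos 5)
Your proposal follows essentially the same route as the paper: you bound $\|\nabla\cdot\sigma(\boldsymbol{w}-\boldsymbol{w}_h)\|_{0,\Omega}$ by inserting the auxiliary eigenpair $(\tilde{\tau},\tilde{\boldsymbol{w}})$ of \eqref{Auxiliary_Eigenvalue_Problem}, treat \eqref{Discrete_Eigenvalue_Problem_h} as the $B_{h0}^3$ discretization of that frozen-parameter problem, and invoke Theorem \ref{theorem_Bh03} together with the Babu\v{s}ka--Osborn nonconforming spectral approximation theory to get the $O(h^2)$ middle estimate, combining it with Lemma \ref{Auxiliary2} and Lemmas \ref{ApproximationEvs1}--\ref{ApproximationEvs2} exactly as the paper does. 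The extra care you flag (uniformity of constants as $\tau_h$ varies, simplicity of the eigenvalue, regularity of $\tilde{\boldsymbol{w}}$) only makes explicit what the paper leaves implicit, so the argument is correct and not a different approach.
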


\begin{proof}
The estimation (\ref{3}) follows from Lemma \ref{ApproximationEvs2}. Actually, we can regard (\ref{Discrete_Eigenvalue_Problem_h}) as the discretized form of linear eigenvalue problem (\ref{Auxiliary_Eigenvalue_Problem}). Combining theorem \ref{theorem_Bh03} and the classical theory of nonconforming finite element method (c.f. \cite{BabuskaOsborn1991}), it's easy to verify
\begin{eqnarray}
\|\nabla\cdot\sigma(\tilde{\boldsymbol{w}}-\boldsymbol{w_h})\|_{0,\Omega}&\lesssim&h^2, \label{11}
\end{eqnarray}
Further, combining (\ref{0}) with (\ref{11}), we can obtain (\ref{1}).
\end{proof}

\section{Numerical Examples}\label{Experiments}
In this section, we give the numerical examples for the bi-elastic problem first, both source problem and eigenvalue problem are considered. What's more, the comparison with Morley element are presented. Then we discuss the numerical examples for the elastic transmission eigenvalue problem and also its comparison with Morley element. All examples are done using Matlab 2016a on a laptop with 16G memory and 2.9GHz Intel Core i7-7500U processor.

\subsection{The bi-elastic source problem}
In subsection 5.1 and 5.2, we use the initial mesh size $h_0=\frac{1}{2}$. Five levels of uniformly refined triangular meshes are generated for the numerical experiment and
$h_k=\frac{h_{k-1}}{2},\ k=1,2,3,4,5$, and $u_{h_k}$ represents the numerical solution on mesh $h_k$.

\textbf{Example 1}: We consider the boundary problem (\ref{ElasticBoundaryProblem}) on the unit square domain $\Omega_1=[0,1]^2$ with the constant coefficient $\beta(\boldsymbol{x})=1$ and Lam\'{e} parameters $\lambda=\frac{1}{4},\ \mu=\frac{1}{16}$. The right-hand item $\boldsymbol{f}$  in (\ref{ElasticBoundaryProblem}) is assumed as
\begin{eqnarray}\nonumber
f_1&=&\frac{3\pi^4sin(\pi x_2)}{256}(663cos(\pi x_1)^2cos(\pi x_2)^2-770cos(\pi x_1)cos(\pi x_2)+910cos(\pi x_1)^3cos(\pi x_2) \\ \nonumber
&-&347cos(\pi x_1)^2-345cos(\pi x_2)^2+177),\\ \nonumber
f_2&=&\frac{3\pi^4sin(\pi x_2)}{256}(663cos(\pi x_1)^2cos(\pi x_2)^2-770cos(\pi x_1)cos(\pi x_2)+910cos(\pi x_1)cos(\pi x_2)^3 \\ \nonumber
&-&345cos(\pi x_1)^2-347cos(\pi x_2)^2+177).
\end{eqnarray}
The exact solution is $\boldsymbol{w}=(sin(\pi x_1)^2 sin(\pi x_2)^3,sin(\pi x_1)^3 sin(\pi x_2)^2)^T$.

 The finest degrees of freedom on $h_5$ are 50182.
On each mesh level, we compute the error between the numerical solution and the exact solution measured by $L_2,\ H^1,\ H^2$ norms respectively. And the convergence rate is computed by
\begin{equation}
log_2\left(\frac{\|u-u_{h_{k}}\|}{\|u-u_{h_{k-1}}\|}\right),\ \ \ \,k=2,3,4,5,\nonumber
\end{equation}
 The numerical results are showed in the left graph of Figure \ref{BoundaryProblem}.
We can observe that

(1) The convergence rate for source problem measured by $H^2$ norm is $O(h^2)$;

(2) The convergence rate for source problem measured by $H^1$ norm is $O(h^3)$;

(3) The convergence rate for source problem measured by $L_2$ norm is $O(h^4)$;

which are optimal and consistent with the theoretical results.

\textbf{Example 2}: We consider (\ref{ElasticBoundaryProblem}) on the triangle domain $\Omega_2$ whose vertices are $(0,0), \ (1,0) \text{and} \ (0,1)$ with non-constant coefficient $\beta=8+x_1-x_2$ and Lam\'{e} parameters $\lambda=\frac{1}{4},\ \mu=\frac{1}{4}$. The right-hand item $\boldsymbol{f}$ is taken as
\begin{eqnarray}\nonumber
f_1&=&(49x_1^4)/2 + (289x_1^3x_2)/2 + 202x_1^3 + (123x_1^2x_2^2)/2 + 1080x_1^2x_2 - (345x_1^2)/2 - (149x_1x_2^3)/2+1308x_1x_2^2 \\ \nonumber
&-& (1425x_1x_2)/2 - 44x_2^4 + 450x_2^3 - 402x_2^2 + 108x_2,\\ \nonumber
f_2&=&44x_1^4 + (149x_1^3x_2)/2 + 358x_1^3 - (123x_1^2x_2^2)/2 + 1284x_1^2x_2 - 366x_1^2 - (289x_1x_2^3)/2 + 1296x_1x_2^2 \\ \nonumber
&-& (1551x_1x_2)/2 + 108x_1 - (49x_2^4)/2 + 230x_2^3 - (327x_2^2)/2.
\end{eqnarray}
The exact solution is $\boldsymbol{w}=(x_1^2x_2^3(x_1+x_2-1)^2,x_1^3x_2^2(x_1+x_2-1)^2)^T$.

The finest degrees of freedom on $h_5$ are 25350. The convergence behaviour is showed in the right graph of Figure \ref{BoundaryProblem}. The convergence orders are also optimal.
\begin{figure}
\centering
\subfigure{\includegraphics[width=0.45\textwidth,height=0.4\textwidth]{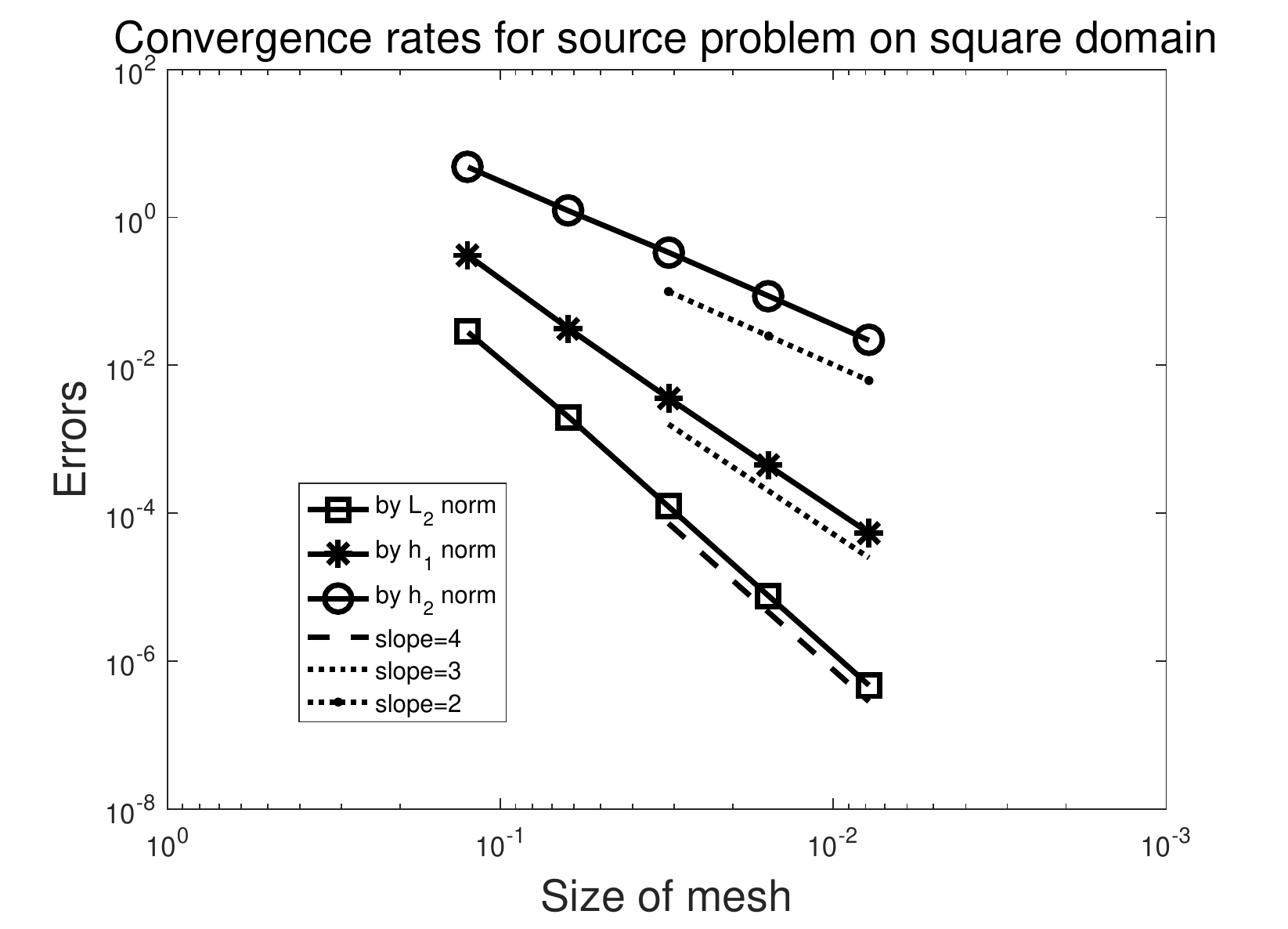}}
\subfigure{\includegraphics[width=0.45\textwidth,height=0.4\textwidth]{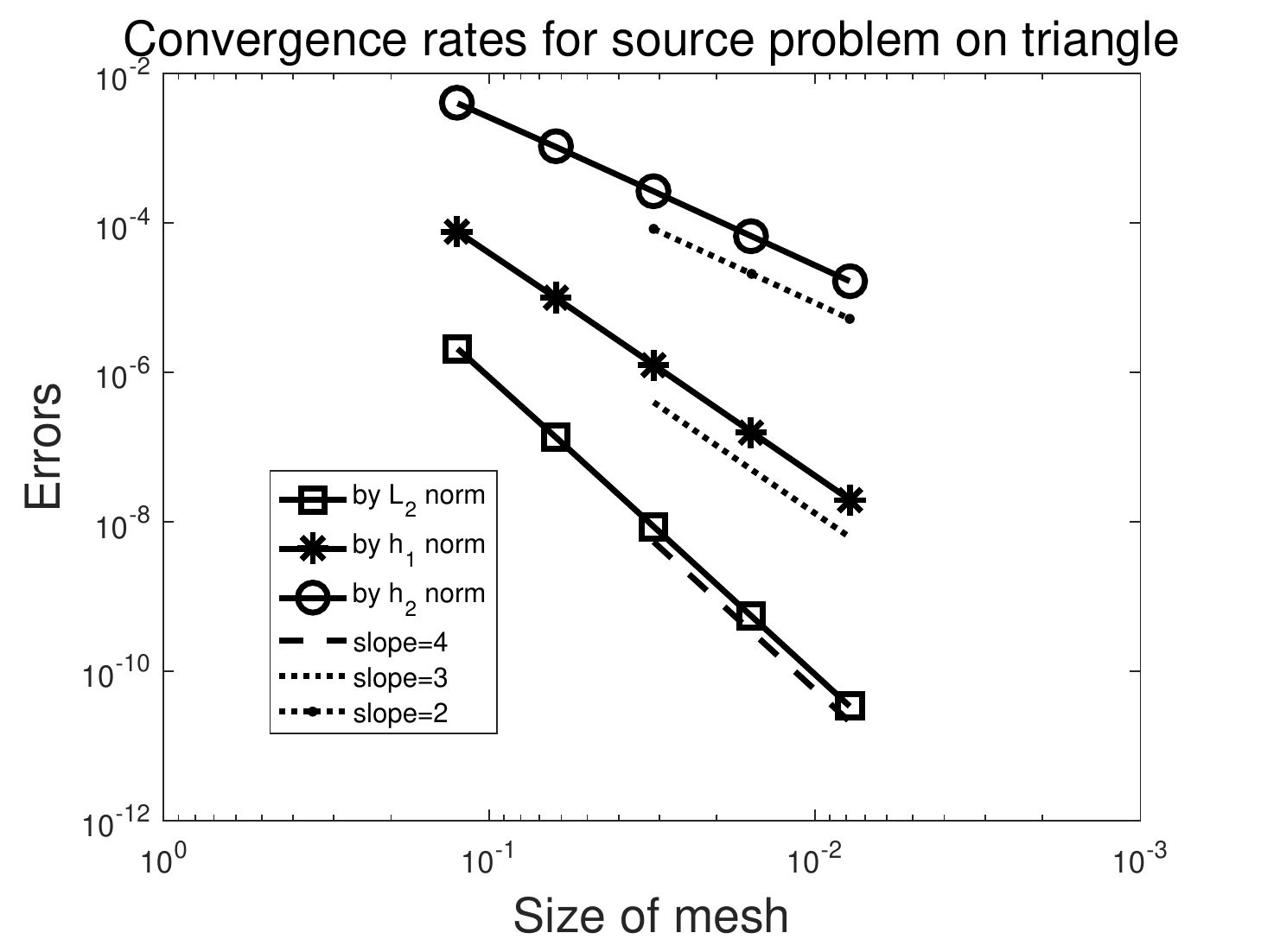}}
\caption{The convergence rate for the bi-elastic source problem. Left: for \textbf{Example 1}. Right: for \textbf{Example 2}.}
\label{BoundaryProblem}
\end{figure}

\subsection{The bi-elastic eigenvalue problem}

In this part, we consider the bi-elastic eigenvalue problem \eqref{ElasticEigenvalueProblem}.

\textbf{Example 3}: We test on the square domain $D_1=[0,1]^2$ and take the Lam\'{e} parameters $\lambda=\frac{1}{4}, \ \mu=\frac{1}{16}$, the constant coefficient $\beta=1$.

The lowest six computed eigenvalues on the successive five meshes are showed in Table \ref{tab:example3}. The following features can be from the numerical experiments.

(1) The convergence rate approximates fourth order which is consistent with the theoretical analysis.

(2) When the mesh size is small enough, the series of computed eigenvalues tends to monotonically decrease. Namely, a guaranteed upper bound of the eigenvalue can be expected.

\begin{table}[htp]
\caption{\label{tab:example3}The performance of {\bf $B_{h0}^3$}  for \textbf{Example 3}.}
\begin{tabular}{cccccccc}\hline
Mesh&1&2&3&4&5&Trend&$Ord_{\lambda}$\\\hline
$\lambda_1$&25.35774&23.39262&23.18043&23.16308&23.16188&$\searrow$&3.85629\\
$\lambda_2$&53.59356&50.42141&50.004164&49.968005&49.965453&$\searrow$&3.82492\\
$\lambda_3$&61.04122&51.06578&50.058280&49.972201&49.965760&$\searrow$&3.74039\\
$\lambda_4$&109.18534&105.40772&103.491568&103.206973&103.186660&$\searrow$&3.80846\\
$\lambda_5$&120.81714&106.74271&105.122452&105.092762&105.090460&$\searrow$&3.68859\\
$\lambda_{6}$&130.19532&106.74793&105.253200&105.104451&105.091409&$\searrow$&3.51167\\\hline
\end{tabular}
\end{table}

\textbf{Example 4}: We consider the bi-elastic eigenvalue problem on square domain $D_1=[0,1]^2$ with Lam\'{e} parameters $\lambda=\frac{1}{4}, \ \mu=\frac{1}{16}$ and the non-constant coefficient $\beta(\boldsymbol{x})=8+x_1-x_2$.

The convergence rate for the lowest six eigenvalues is showed in Table \ref{tab:example4}. The theoretical fourth-order convergence rate can be observed.

\begin{table}[htp]
\caption{\label{tab:example4}The performance of {\bf $B_{h0}^3$} for \textbf{Example 4}.}
\begin{tabular}{cccccccc}\hline
Mesh&1&2&3&4&5&Trend&$Ord_{\lambda}$\\\hline
$\lambda_1$&202.60084&186.85880&185.15778&185.01868&185.00907&$\searrow$&3.85612\\
$\lambda_2$&428.15681&402.73873&399.39743&399.10787&399.08743&$\searrow$&3.82485\\
$\lambda_3$&487.85256&407.93399&399.86251&399.17279&399.12120&$\searrow$&3.74081\\
$\lambda_4$&871.47709&841.01287&824.91149&822.72993&822.57169&$\searrow$&3.78513\\
$\lambda_5$&965.09121&847.47324&838.68117&838.43824&838.41936&$\searrow$&3.68622\\
$\lambda_{6}$&1041.68270&858.25905&842.82598&841.53746&841.42863&$\searrow$&3.56551\\\hline
\end{tabular}
\end{table}

\textbf{Example 5}: We consider the bi-elastic eigenvalue problem on the triangle domain whose vertices are $(0,0), (1,0), (\frac{1}{2},\frac{\sqrt{3}}{2})$ with Lam\'{e} parameters $\lambda=\frac{1}{4}, \ \mu=\frac{1}{4}$ and the non-constant coefficient $\beta=4+x_1^2+x_2^2$.

The numerical results are showed in Table \ref{tab:example5}.
The convergence rate for the lowest six eigenvalues is the optimal fourth order. And a guaranteed upper bound of the eigenvalue can be expected.

\begin{table}[htp]
\caption{\label{tab:example5}The performance of {\bf $B_{h0}^3$} for \textbf{Example 5}.}
\centering
{\small
\begin{tabular}{cccccccc}\hline
Mesh&1&2&3&4&5&Trend&$Ord_{\lambda}$\\\hline
$\lambda_1$&9158.98871&9080.38967&9074.44378&9074.03982&9074.01382&$\searrow$&3.95755\\
$\lambda_2$&9397.12583&9317.45937&9311.35816&9310.94206&9310.91526&$\searrow$&3.95656\\
$\lambda_3$&12853.77410&12686.55168&12669.66102&12668.40917&12668.32704&$\searrow$&3.92991\\
$\lambda_4$&32158.86909&31378.08662&31295.61981&31289.24927&31288.82506&$\searrow$&3.90858\\
$\lambda_5$&32498.29559&31743.16901&31664.35473&31658.28934&31657.88586&$\searrow$&3.91005\\
$\lambda_{6}$&40925.18544&39991.62636&39921.29040&39916.42396&39916.10683&$\searrow$&3.93972\\\hline
\end{tabular}
}
\end{table}
\subsection{Comparison with the Morley Element Scheme}\label{BiElasticMorley}
We check the Morley element scheme for the eigenvalue problem \eqref{ElasticEigenvalueProblem}. Since
$\left(\beta\nabla\cdot\sigma(\boldsymbol{w}),\nabla\cdot\sigma(\boldsymbol{\varphi})\right)$ is not coercive on Morley element space.
From \eqref{Elastic_Derivations}, we can obtain that
\begin{small}
\begin{eqnarray}
\Big(\nabla\cdot\sigma(\boldsymbol{w}),\nabla\cdot\sigma(\boldsymbol{\varphi})\Big)
=(\lambda^2+2\lambda\mu)\Big(\nabla\nabla\cdot\boldsymbol{w}, \nabla\nabla\cdot\boldsymbol{\varphi}\Big)+\mu^2(\Delta\boldsymbol{w},\Delta\boldsymbol{\varphi}). \nonumber
\end{eqnarray}
\end{small}
Hence, under the assumption that $0<\hat{\alpha}<1$, the higher order variational formulation item can be transfered into
\begin{footnotesize}
\begin{eqnarray}
\Big(\nabla\cdot\sigma(\boldsymbol{w}),\nabla\cdot\sigma(\boldsymbol{\varphi})\Big)&=&\Big(\nabla\cdot\sigma(\boldsymbol{w}),\nabla\cdot\sigma(\boldsymbol{\varphi})\Big)-\hat{\alpha}\mu^2(\nabla^2\boldsymbol{w},\nabla^2\boldsymbol{\varphi})+\hat{\alpha}\mu^2(\nabla^2\boldsymbol{w},\nabla^2\boldsymbol{\varphi})\nonumber \\
&=&(1-\hat{\alpha})\Big(\nabla\cdot\sigma(\boldsymbol{w}),\nabla\cdot\sigma(\boldsymbol{\varphi})\Big)+\hat{\alpha}\Big(\nabla\cdot\sigma(\boldsymbol{w}),\nabla\cdot\sigma(\boldsymbol{\varphi})\Big) -\hat{\alpha}\mu^2(\nabla^2\boldsymbol{w},\nabla^2\boldsymbol{\varphi})+\hat{\alpha}\mu^2(\nabla^2\boldsymbol{w},\nabla^2\boldsymbol{\varphi}) \nonumber \\
&=&(1-\hat{\alpha})\Big(\nabla\cdot\sigma(\boldsymbol{w}),\nabla\cdot\sigma(\boldsymbol{\varphi})\Big)+\hat{\alpha}\mu^2(\Delta\boldsymbol{w},\Delta\boldsymbol{\varphi})+\hat{\alpha}(\lambda^2+2\lambda\mu)(\nabla\nabla\cdot\boldsymbol{w},\nabla\nabla\cdot\boldsymbol{\varphi})-\hat{\alpha}\mu^2(\nabla^2\boldsymbol{w},\nabla^2\boldsymbol{\varphi})+\hat{\alpha}\mu^2(\nabla^2\boldsymbol{w},\nabla^2\boldsymbol{\varphi})\nonumber\\
&=&(1-\hat{\alpha})\Big(\nabla\cdot\sigma(\boldsymbol{w}),\nabla\cdot\sigma(\boldsymbol{\varphi})\Big)+\hat{\alpha}(\lambda^2+2\lambda\mu)(\nabla\nabla\cdot\boldsymbol{w},\nabla\nabla\cdot\boldsymbol{\varphi})+\hat{\alpha}\mu^2(\nabla^2\boldsymbol{w},\nabla^2\boldsymbol{\varphi}),\nonumber
\end{eqnarray}
\end{footnotesize}
where
\begin{equation}
(\nabla^2 \boldsymbol{w},\nabla^2 \boldsymbol{\varphi})=\int_\Omega\sum_{s,t=1}^2\frac{\partial^2 w_1}{\partial x_s\partial x_t}\frac{\partial^2 \varphi_1}{\partial x_s\partial x_t}dx+\int_\Omega\sum_{s,t=1}^2\frac{\partial^2 w_2}{\partial x_s\partial x_t}\frac{\partial^2 \varphi_2}{\partial x_s\partial x_t}dx.  \nonumber
\end{equation}
For Morley element, we consider the following variational
formulation: find $(K,\boldsymbol{w})\in \mathcal{C}\times V$, for $\forall \boldsymbol{\varphi}\in V$, we have
{\footnotesize
\begin{equation}\label{NC_alphas_VariationalFormulation}
\Big(\beta(x)\nabla\cdot\sigma(\boldsymbol{w}),\nabla\cdot\sigma(\boldsymbol{\varphi})\Big)=\Big((\beta(x)-\tilde{\alpha})\nabla\cdot\sigma(\boldsymbol{w}),\nabla\cdot\sigma(\boldsymbol{\varphi})\Big)+\tilde{\alpha}\mu^2(\nabla^2 \boldsymbol{w},\nabla^2\boldsymbol{\varphi})+\tilde{\alpha}(\lambda^2+2\lambda\mu)\Big(\nabla\nabla\cdot\boldsymbol{w},\nabla\nabla\cdot\boldsymbol{\varphi}\Big)=K(\boldsymbol{w},\boldsymbol{\varphi}).  
\end{equation}
}
where $0<\tilde{\alpha}<\beta_{min}$ is constant.

The items in the middle of (\ref{NC_alphas_VariationalFormulation}) are used to guarantee the coercivity of variational problem.

The Morley element discretization space for $H_0^2(\Omega)$ is denoted by $V_h^M$. The corresponding discretized variational formulation is: Find $\boldsymbol{w_h}\in (V_h^M)^2$ and $K_h\in \mathcal{C}$, such that
{\footnotesize
\begin{equation}\label{Morley_alphas_VariationalFormulation}
\Big((\beta(x)-\alpha)\nabla\cdot\sigma(\boldsymbol{w_h}),\nabla\cdot\sigma(\boldsymbol{\varphi_h})\Big)+\alpha\mu^2(\nabla^2 \boldsymbol{w_h},\nabla^2\boldsymbol{\varphi_h})+\alpha(\lambda^2+2\lambda\mu)\Big(\nabla\nabla\cdot\boldsymbol{w_h},\nabla\nabla\cdot\boldsymbol{\varphi_h}\Big)=K_h(\boldsymbol{w_h},\boldsymbol{\varphi_h}),\ \ \forall \boldsymbol{\varphi_h}\in (V_h^M)^2,
\end{equation}
}
where $0<\alpha<\beta_{min}$ is a constant.
We test the Morley element method on \textbf{Example 4} and \textbf{Example 5}. For \textbf{Example 4}, the lowest ten computed eigenvalues on three successive grid levels are showed in the left figure of Figure \ref{BiElasticProblem}.
We can observe that the numerical results are sensitive to the parameter $\alpha$.
For \textbf{Example 5}, the numerical results are showed in the right figure of Figure \ref{BiElasticProblem}. For different parameter $\alpha$, the computed eigenvalues are different. For different $\beta(x)$, the optimal $\alpha$ is also different.
\begin{figure}
\centering
\subfigure{\includegraphics[width=0.45\textwidth,height=0.4\textwidth]{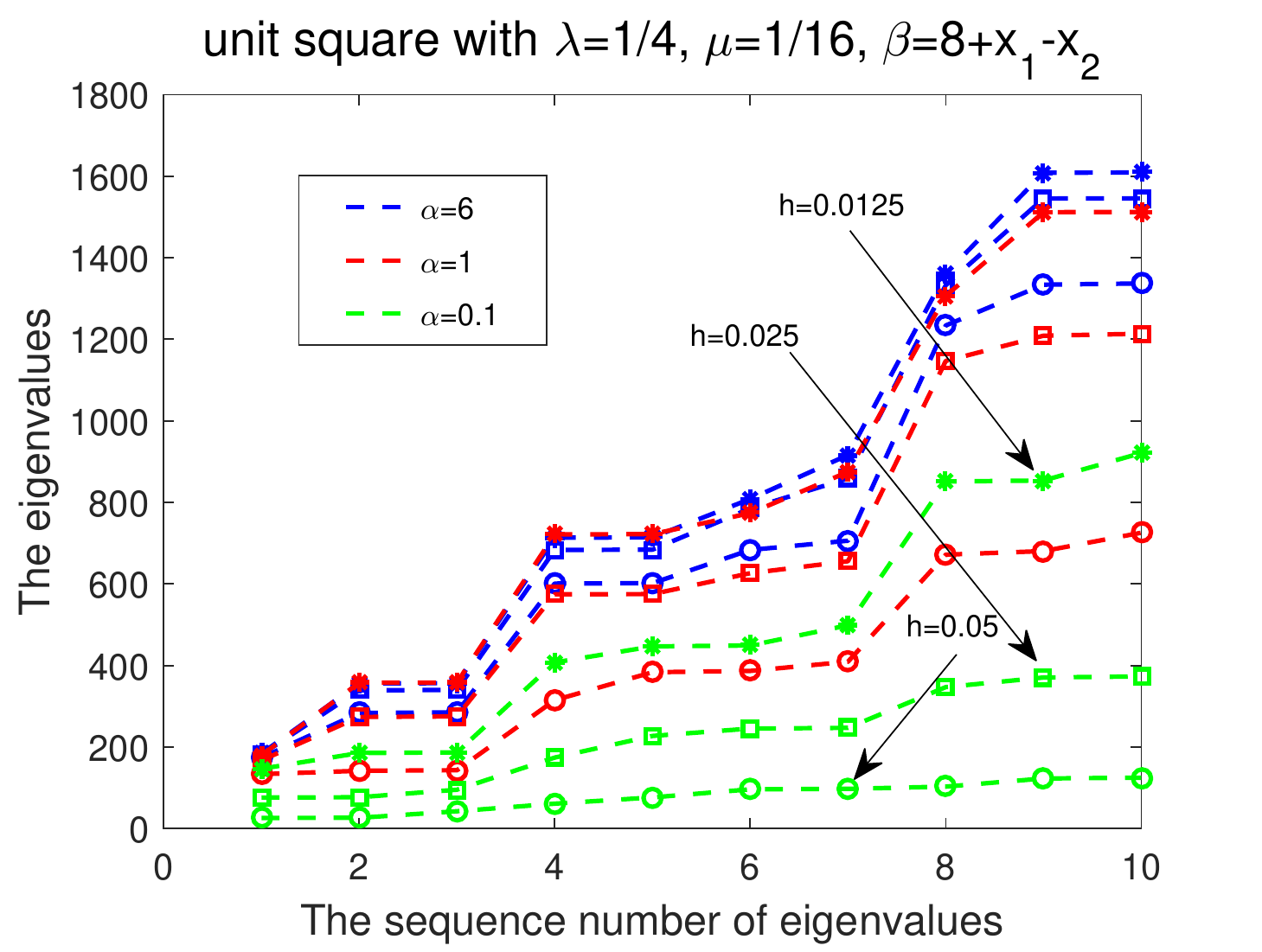}}
\subfigure{\includegraphics[width=0.45\textwidth,height=0.4\textwidth]{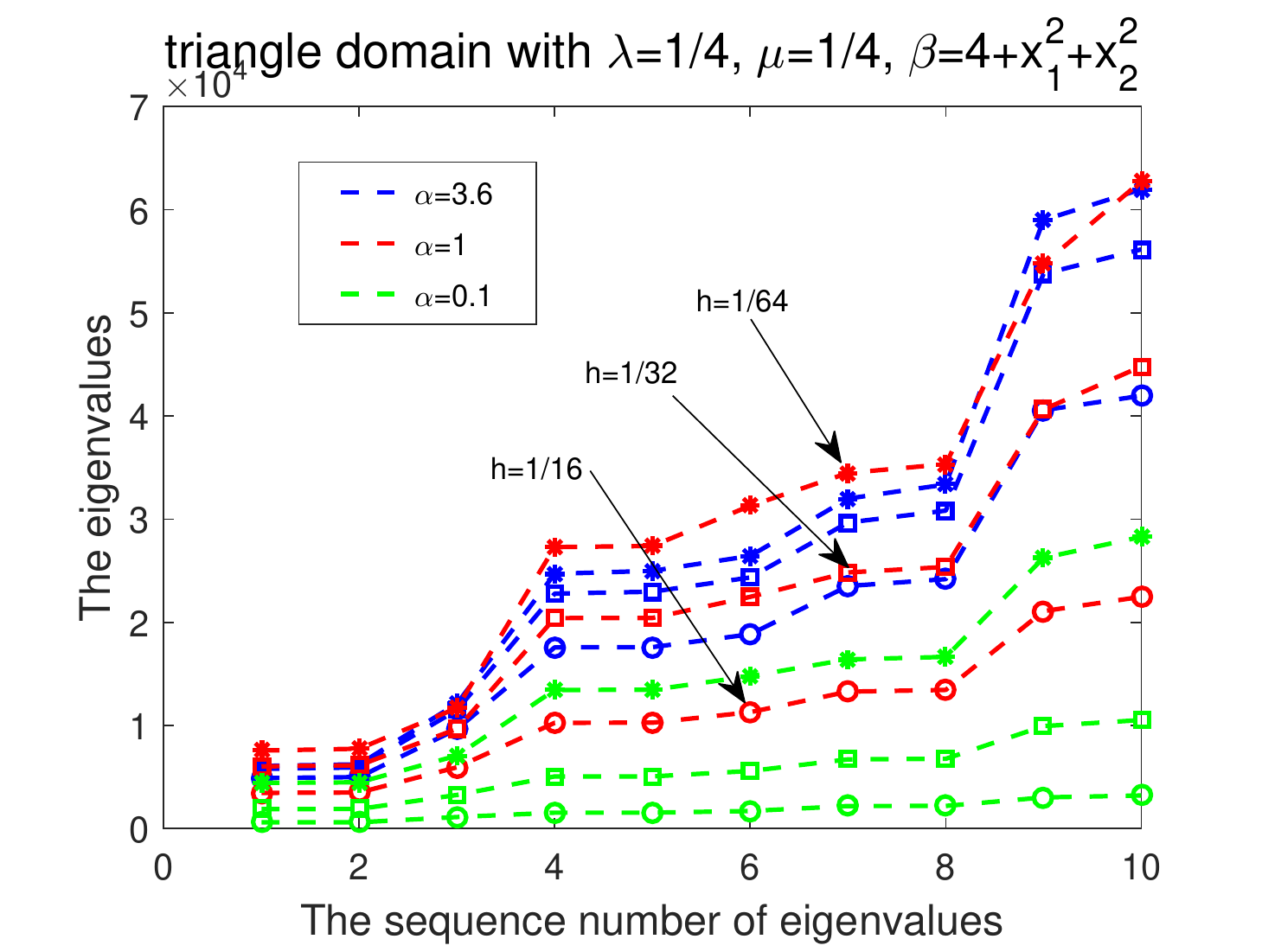}}
\caption{The numerical performance by Morley element for bi-Elastic eigenvalue problem with non-constant coefficient. Left: for $\lambda=\frac{1}{4},\mu=\frac{1}{16}$ with $\beta(x)=8+x_1-x_2$ on unit square; Right: for $\lambda=\mu=\frac{1}{4}$ with $\beta(x)=4+x_1^2+x_2^2$ on triangle domian.}
\label{BiElasticProblem}
\end{figure}

\subsection{The elastic transmission eigenvalue problem}
Here we focus on the
case $\rho_0(x)\leq Q\leq1\leq q_*\leq\rho_1(x)$. For the case $\rho_1(x)\leq Q_*\leq1\leq q\leq\rho_0(x)$, it follows analogously. We present some numerical results using three domains:
the unit square $\Omega_1=[0,1]^2$, a triangle domain $\Omega_2$ whose vertices are $(0,0), (1,0), (\frac{1}{2},\frac{\sqrt{3}}{2})$ and an L-shaped domain given by $\Omega_3=(0,1)\times (0,1) \setminus [1/2, 1]\times[1/2,1]$. The initial mesh size is chosen as $h_1=\frac{1}{2}$.
Five levels of uniformly refined triangular meshes are generated for numerical experiments and $h_k=h_{k-1}/2 (k=2,3,4,5)$.
The eigenvalues computed on the mesh of size $h_k$ are denoted by $\lambda_{h_k} (k=1,2,3,4,5)$.
The convergent orders are computed by
\begin{equation}
log_2(|\frac{\lambda_{h_l}-\lambda_{h_{5}}}{\lambda_{h_{l+1}}-\lambda_{h_{5}}}|),\ \ \ \ l=1,2,3.
\end{equation}

We consider the following examples.

\textbf{Example 6.} The unit square domain $\Omega_1$ with Lam\'{e} parameters $\mu = 1/4,\ \lambda = 1/4$ and the mass density $\rho_0(x) = \frac{1}{20}, \ \rho_1(x) = 3$.

\textbf{Example 7.} The unit square domain $\Omega_1$ with Lam\'{e} parameters $\mu = 1/4$, $\lambda = 1/12$ and the mass density $\rho_0(x) = \frac{1}{2}$, $\rho_1(x) = 4+x_1-x_2$.

\textbf{Example 8.} The triangle domain $\Omega_2$ with Lam\'{e} parameters $\mu = 1/4,\ \lambda = 1/16$ and the mass density $\rho_0(x) = \frac{1}{8},\  \rho_1(x) = 4+x_1^2+x_2^2$.

\textbf{Example 9.} The L-shaped domain $\Omega_3$ with Lam\'{e} parameters $\mu = 1/4,\ \lambda = 1/16$ and the mass density $\rho_0(x) = 1, \ \rho_1(x) = 4$.

Table~\ref{tablefirstten} gives the first ten eigenvalues of the above four examples, which is consistent with the result of $\omega^2$ in \cite{XiJi2018}. For \textbf{Example 6}, \textbf{Example 7}, \textbf{Example 8}, the finest degrees of freedom are 23564. The convergence order of the lowest
six eigenvalues are showed in Figure~\ref{ETEP_6evs_example6}, Figure~\ref{ETEP_6evs_example7}, Figure~\ref{ETEP_6evs_example8}, respectively. The convergence rate on unit square or triangle domain is approximately 4 which is optimal and consistent with the theoretical analysis. For \textbf{Example 9}, the finest degrees of freedom are 47628.
The convergence rate on L-shaped domain is showed in Figure~\ref{ETEP_6evs_example9} which is lower than 4 due to the fact that reentrant corner leads to the low regularity of eigenfunctions which is consistent with the results in \cite{JiLiSun, XiJi2018}.

\begin{table}
\begin{center}
\begin{tabular}{ccccc}
\hline
Eigenvalue & \textbf{Example 6} &\textbf{Example 7}  &\textbf{Example 8} &\textbf{Example 9}    \\
\hline
$\Lambda_1$ &8.064689    &2.172958      &3.992401&  3.612558-3.041481i   \\
$\Lambda_2$ &9.561642    &3.122101      &5.491747&  3.612558+3.041481i   \\
$\Lambda_3$ &9.561852    &3.125644      &5.586717&  4.870908  \\
$\Lambda_4$ &14.001823   &4.052862      &7.522283&  5.284471 \\
$\Lambda_5$ &14.002024   &4.574646      &7.619984&  6.293109   \\
$\Lambda_6$ &14.256426   &5.469985      &8.202532&  6.654243  \\
$\Lambda_7$ &15.125605   &5.558520      &9.379437&  7.394237  \\
$\Lambda_8$ &20.589411   &6.065146      &9.436227&  5.835047-4.720085i  \\
$\Lambda_9$ &20.590683   &6.918999      &10.469256& 5.835047+4.720085i   \\
$\Lambda_{10}$&20.762326 &7.124076      &11.046607& 8.020355   \\
\hline
\end{tabular}
\end{center}
\caption{The first ten elastic transmission eigenvalues on the finest mesh level of $h\approx0.015625$.}
\label{tablefirstten}
\end{table}

\begin{figure}[ht]
\begin{center}
\includegraphics[scale=0.55]{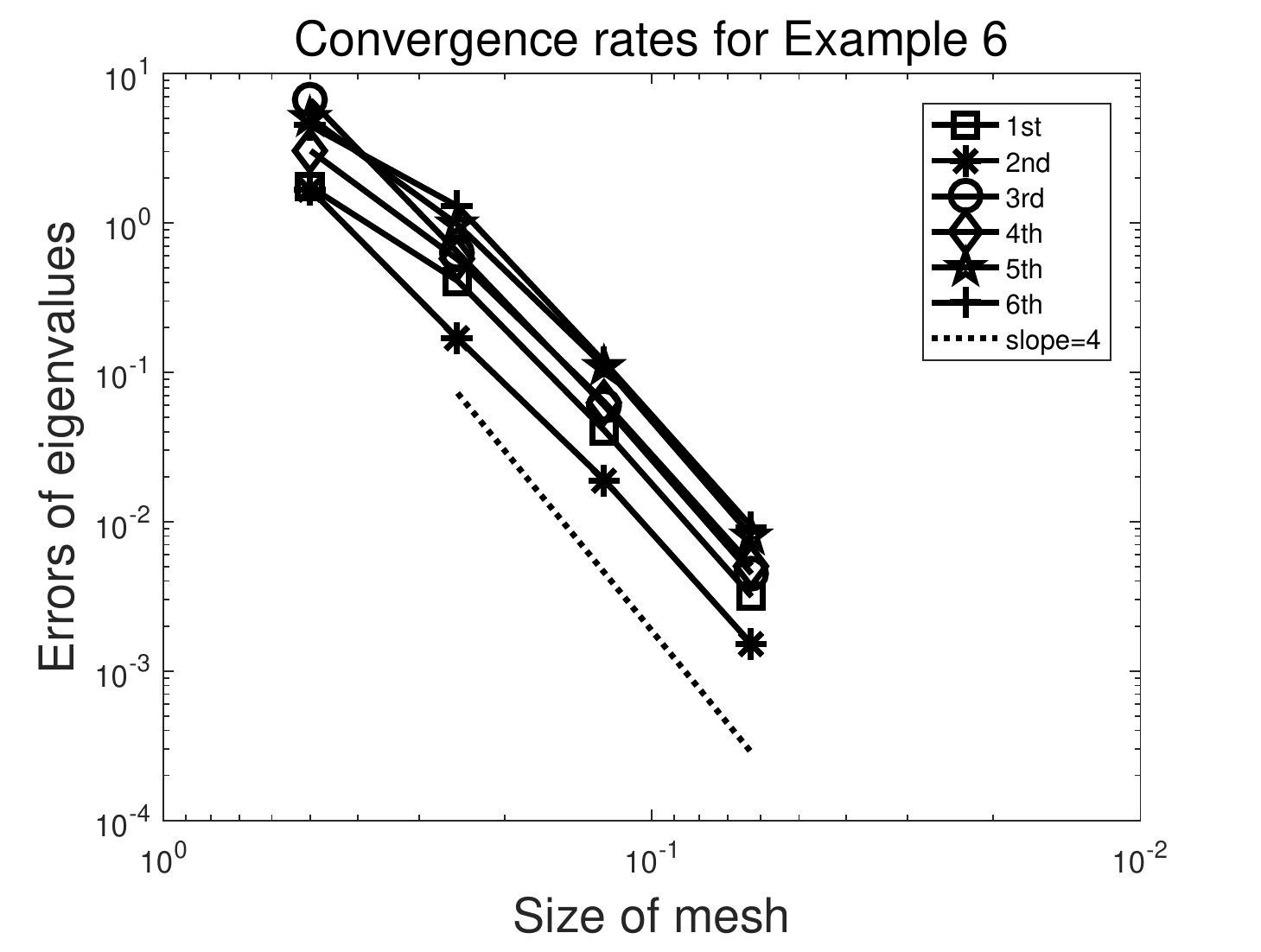}
\end{center}
\caption{The convergence rates for the lowest six real eigenvalues for \textbf{Example 6} by $B_{h0}^3$. Y-axis means
the numerical error of eigenvalues; X-axis means the size of mesh.}
\label{ETEP_6evs_example6}
\end{figure}

\begin{figure}[ht]
\begin{center}
\includegraphics[scale=0.55]{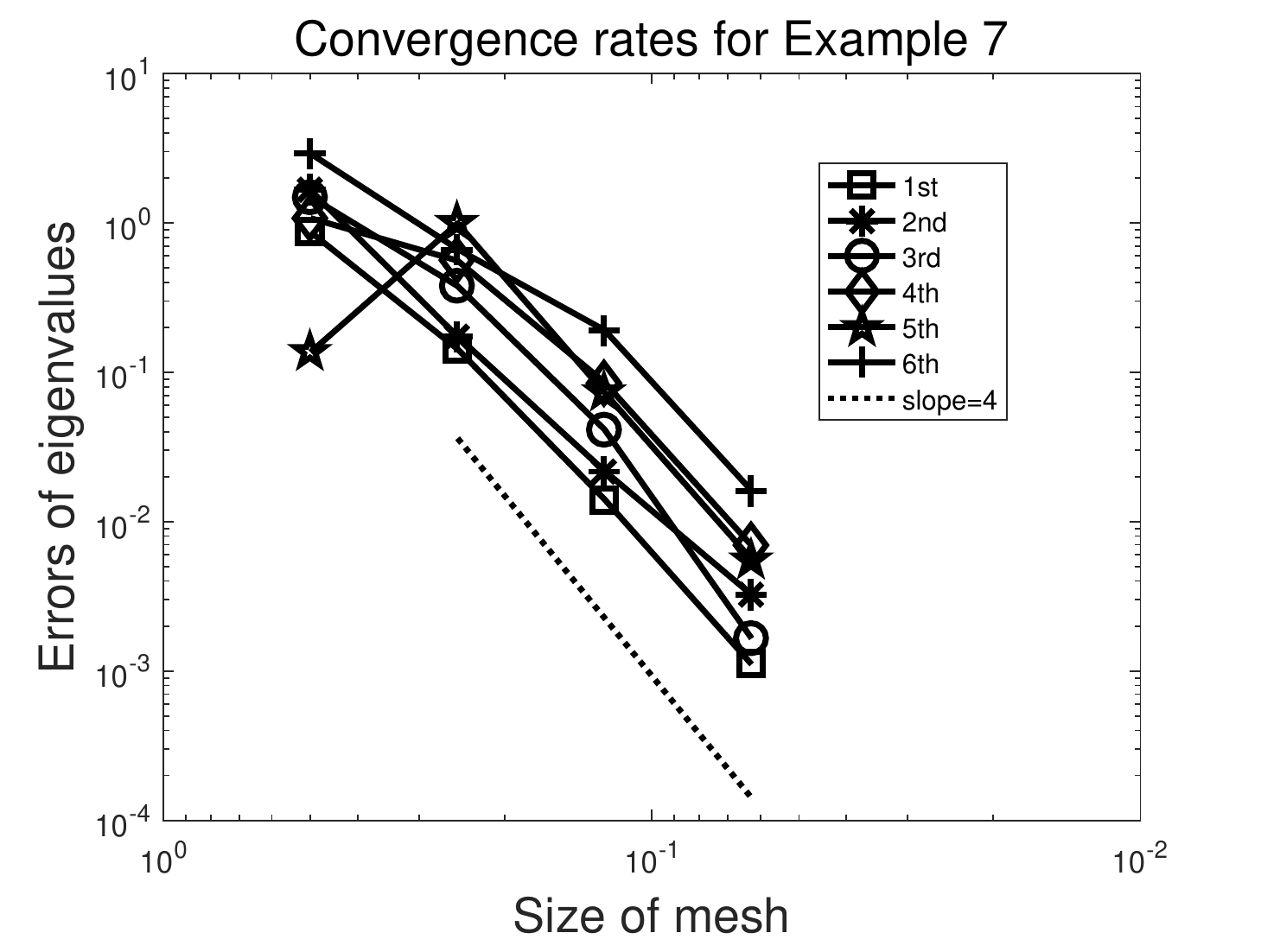}
\end{center}
\caption{The convergence rates for the lowest six real eigenvalues for \textbf{Example 7} by $B_{h0}^3$. Y-axis means
the numerical error of eigenvalues; X-axis means the size of mesh.}
\label{ETEP_6evs_example7}
\end{figure}

\begin{figure}[ht]
\begin{center}
\includegraphics[scale=0.55]{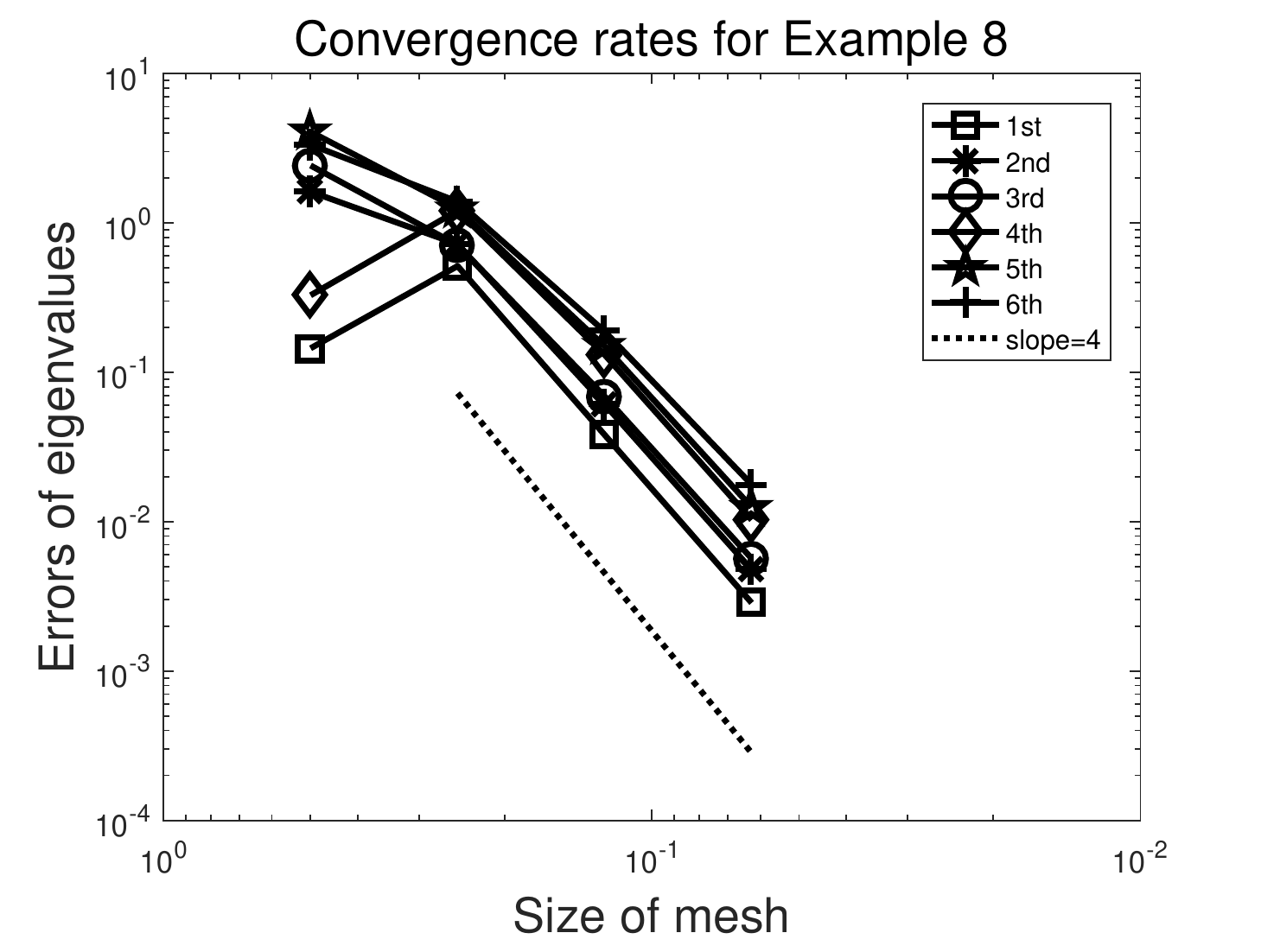}
\end{center}
\caption{The convergence rates for the lowest six real eigenvalues for \textbf{Example 8} by $B_{h0}^3$. Y-axis means
the numerical error of eigenvalues; X-axis means the size of mesh.}
\label{ETEP_6evs_example8}
\end{figure}

\begin{figure}[ht]
\begin{center}
\includegraphics[scale=0.55]{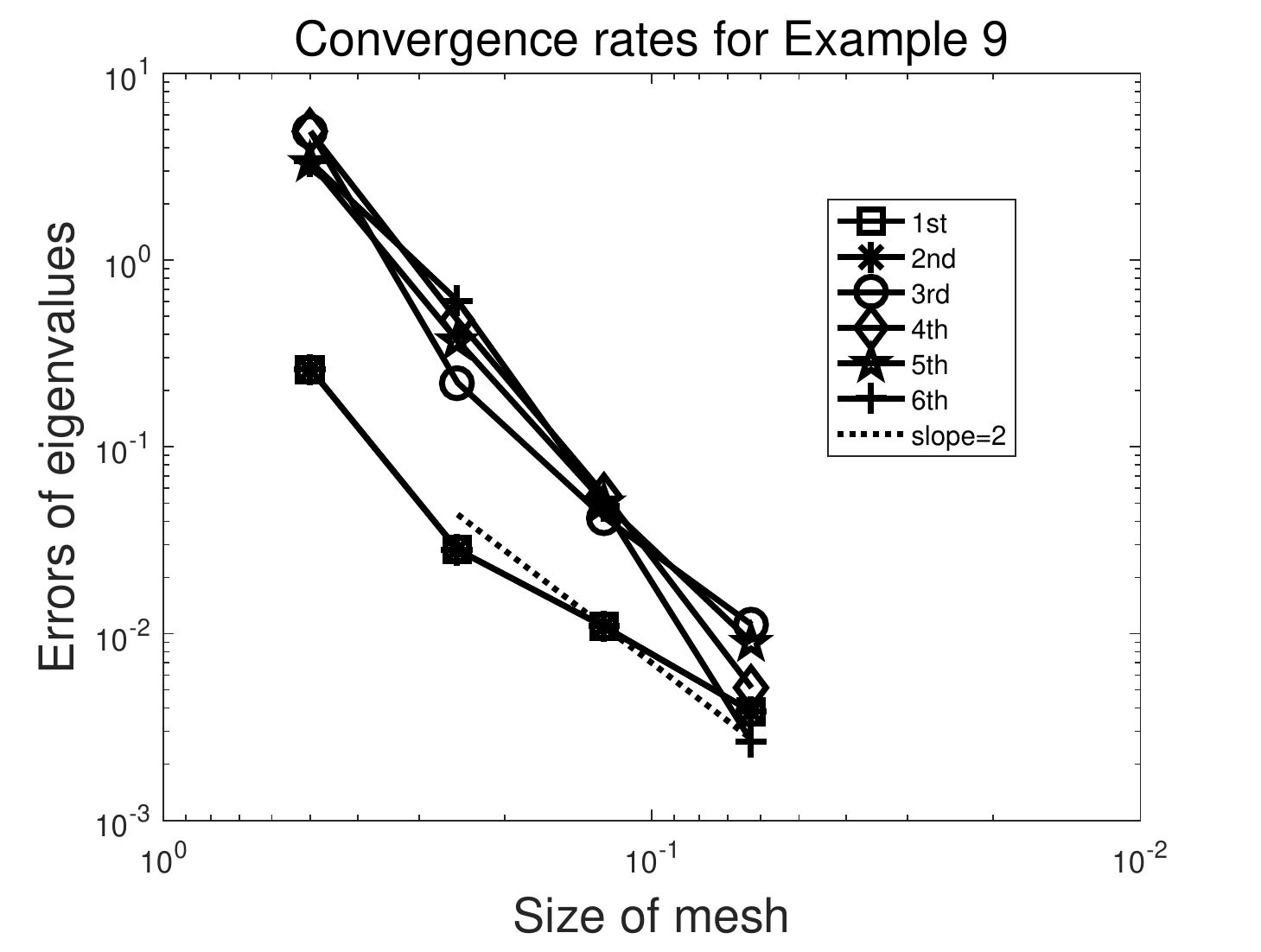}
\end{center}
\caption{The convergence rates for the lowest six real eigenvalues for \textbf{Example 9} by $B_{h0}^3$. Y-axis means
the numerical error of eigenvalues; X-axis means the size of mesh.}
\label{ETEP_6evs_example9}
\end{figure}

\subsection{Comparision with the Morley scheme}
In this subsection, we check the Morley element scheme for the elastic transmission eigenvalue problem \eqref{WeakFormula}. We take the case $\rho_0(x)\leq Q\leq1\leq q_*\leq\rho_1(x)$ for illustration. For the case $\rho_1(x)\leq Q_*\leq1\leq q\leq\rho_0(x)$, it can be treated analogously.
For Morley element, we consider the expansion of varitional formulation \eqref{WeakFormula}: find $\tau$ and $\boldsymbol{w}\neq0\in V$ such that, for $\forall\boldsymbol{\varphi}\in V$
\begin{equation}
\left((\rho_1-\rho_0)^{-1}\nabla\cdot\sigma(\boldsymbol{w}),\nabla\cdot\sigma(\boldsymbol{\varphi})\right)+\tau\left(\frac{\rho_0}{\rho_1-\rho_0}\boldsymbol{w},\nabla\cdot\sigma(\boldsymbol{\varphi})\right)+
\tau\left(\frac{\rho_1}{\rho_1-\rho_0}\nabla\cdot\sigma(\boldsymbol{w}),\boldsymbol{\varphi}\right)+\tau^2\left(\frac{\rho_0\rho_1}{\rho_1-\rho_0}\boldsymbol{w},\boldsymbol{\varphi}\right)=\boldsymbol{0}.
\end{equation}
In order to guarantee the coerciveness on the Morley finite element space, for the non-constant coefficient, we assume $0<\alpha\leq\rho_{min}\triangleq\min\frac{1}{\rho_1(x)-\rho_0(x)}$ and transform the higher order variational formulation to the following form:
{\small
\begin{equation}\label{revised_bilinear_form}
\Big((\rho_1-\rho_0)^{-1}\nabla\cdot\sigma(\boldsymbol{w}),\nabla\cdot\sigma(\boldsymbol{\varphi})\Big)=\Big(((\rho_1-\rho_0)^{-1}-\alpha)\nabla\cdot\sigma(\boldsymbol{w}),\nabla\cdot\sigma(\boldsymbol{\varphi})\Big)+\alpha\mu^2(\nabla^2 \boldsymbol{w},\nabla^2\boldsymbol{\varphi})+\alpha(\lambda^2+2\lambda\mu)\Big(\nabla\nabla\cdot\boldsymbol{w},\nabla\nabla\cdot\boldsymbol{\varphi}\Big).  
\end{equation}
}
The form on the right-hand side of (\ref{revised_bilinear_form}) guarantees the coercivity of the variational formulation on $V$. However, in the practical computation, the numerical performance is sensitive to the choice of $\alpha$. The left figure in Figure \ref{ElasticTEP} shows the numerical performance by Morley element for unit square domain $\Omega_1=[0,1]^2$ with the cofficients
$\lambda=1/4,\ \mu= 1/4,\ \rho_0(x)=\frac{1}{20},\ \rho_1(x)=3$. For a fixed $\alpha$, we present the lowest 10 computed eigenvalues on three successive grid levels. It's observed that the numerical results are greatly dependent on the choice of $\alpha$.
The right figure in Figure \ref{ElasticTEP} shows the numerical performance for unit square domain with $\lambda=1/4,\mu= 1/12,\rho_0(x)=\frac{1}{2},\rho_1(x)=4+x_1-x_2$. For different index of refractions, the optimal choice of $\alpha$ is also different.

\begin{figure}
\centering
\subfigure{\includegraphics[width=0.45\textwidth,height=0.4\textwidth]{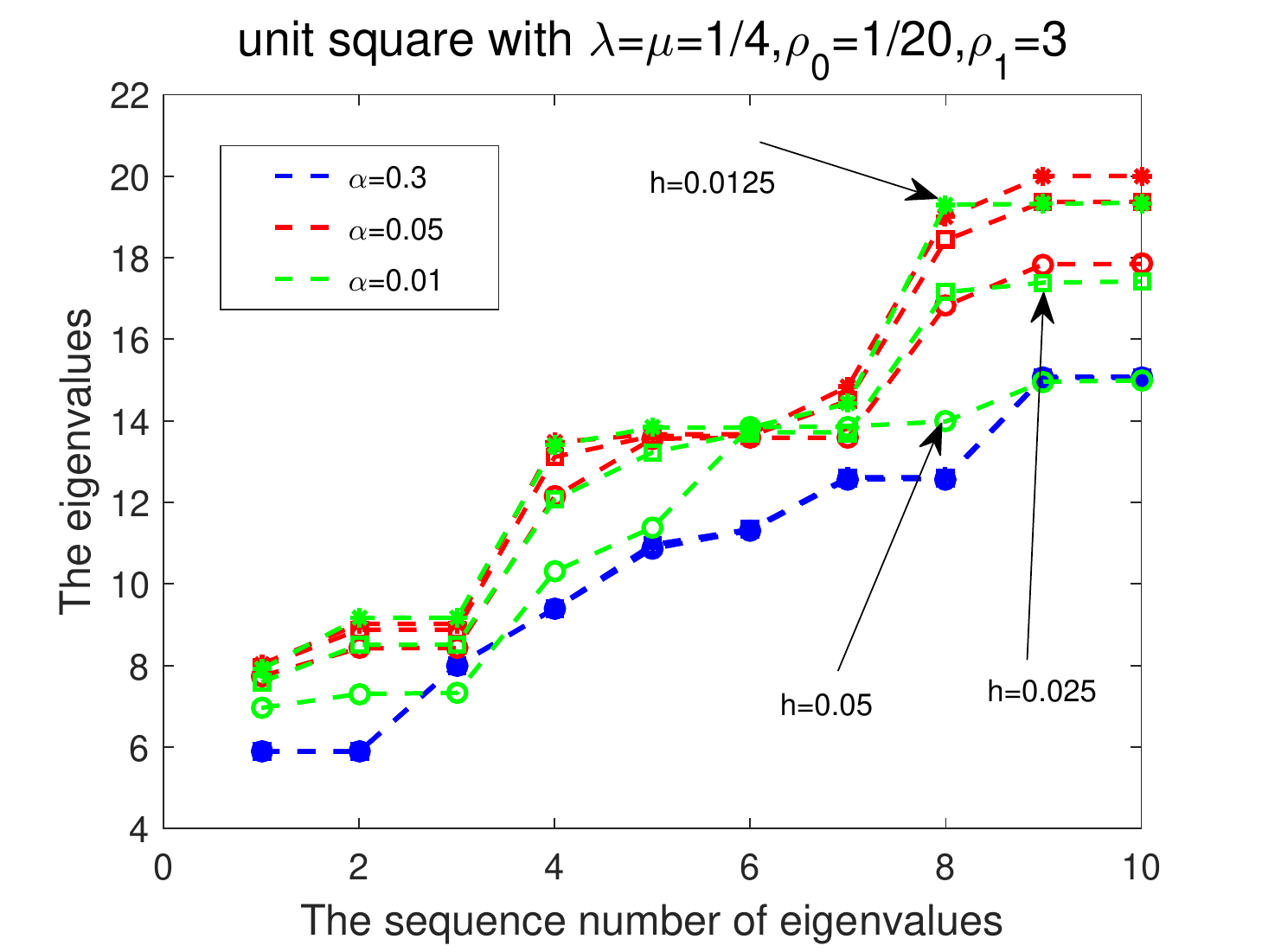}}
\subfigure{\includegraphics[width=0.45\textwidth,height=0.4\textwidth]{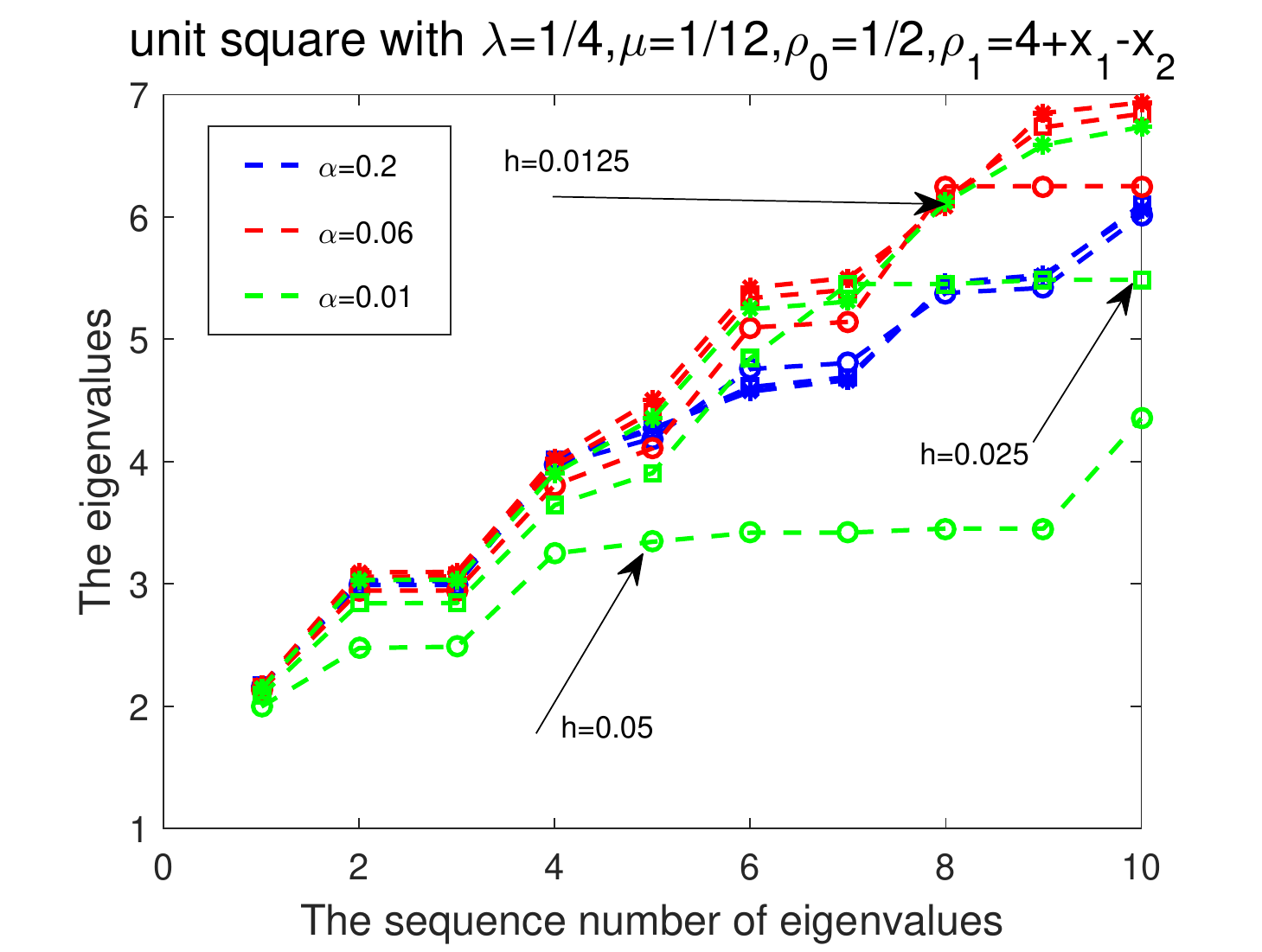}}
\caption{Morley element scheme for elastic transmission eigenvalue problem. Left:  $\lambda=\mu=\frac{1}{4}$ with $\rho_0(x)=\frac{1}{20}$, $\rho_1(x)=3$ on unit square; Right: $\lambda=\frac{1}{4},\mu=\frac{1}{12}$ with $\rho_0(x)=\frac{1}{2}$, $\rho_1(x)=4+x_1-x_2$ on unit square.}
\label{ElasticTEP}
\end{figure}

\end{document}